\newtheorem{theorem}{Theorem}[section]
\newtheorem{lemma}[theorem]{Lemma}
\newtheorem{proposition}[theorem]{Proposition}
\numberwithin{equation}{section}
\newenvironment{proof}[1][Proof]{\noindent\textit{#1.} }{\hfill \rule{0.5em}{0.5em}}
\begin{document}
\date{\today}

\title{An estimation of Fisher information bound for distribution-dependent SDEs driven by fractional Brownian motion with small noise}

\author
{Tongxuan Liu, Qian Yu}

\maketitle

\begin{abstract}
\noindent In this paper, we consider the following distribution-dependent SDE driven by fractional Brownian motion with small noise
\begin{equation*}
    X_{t,\varepsilon}=X_0+\int_0^t{b( s,X_{s,\varepsilon},\mathcal{L}_{X_{s,\varepsilon}} )}ds
    +\varepsilon^H \int_0^t{\sigma ( s,\mathcal{L}_{X_{s,\varepsilon}} )}dB_{s}^{H}, t\in[0,T],
\end{equation*}
where the initial condition $X_0$ is a real number, $\mathcal{L}_{X_{t,\varepsilon}}$ denotes the law of $X_{t,\varepsilon}$, $\{B_{H}^{t}, t\geq0\}$ is a
fractional Brownian motion with Hurst index $H \in (\frac{1}{2},1)$ and
$\varepsilon $ is a small parameter.
We study the rate  of Fisher information convergence in the central limit theorem for the solution of small noise SDE, then we show that the convergence
rate $\varepsilon^{2H}$ is of optimal order.

\vskip.2cm \noindent {\bf Keywords:} Fisher information; distribution-dependent SDE; fractional Brownian motion; Malliavin calculus.

\vskip.2cm \noindent {\it Subject Classification: 94A17; 60H07; 60H10.}
\end{abstract}

\section{Introduction}{\label{Introduction}}

Given a random variable $F$ with an absolutely continuous density $p_F$,
the Fisher information of $F$ is defined by
\begin{equation*}
    I( F ) =\int_{-\infty}^{+\infty}\frac{[p'_{F}( x )] ^2}{p_F( x )}dx
    =\mathbb{E}[\rho_{F}^{2}(F)]
\end{equation*}
where $p'_{F}$ denotes a Radon-Nikodym derivative of $p_F$ and
$\rho_F=\frac{p'_F}{p_{F}}$ is the score function.
Let $N$ be a  normal distribution $\mathcal{N}(\mu,\sigma^2)$, then the Fisher information distance of $F$ to $N$ can be defined by
\begin{equation*}
    I(F\|N)=\mathbb{E}\left[\left ( \rho _F( F ) +\frac{F-\mu}{\sigma ^2} \right)^2 \right] .
\end{equation*}
Moreover, if the derivative $p'_F$ does not exist, the Fisher information distance
is defined to be infinite.

The research on Fisher's information convergence can be traced back to a paper published by Linnik \cite{ref14} in 1959. This has aroused the research interest of many scholars and produced many research results on Fisher information convergence. However, most of the existing results are devoted to the sums of independent random variables (see \cite{ref1}, \cite{ref2}, \cite{ref3},  \cite{ref5}, \cite{ref4} and references therein). Recently, Nourdin and Nualart \cite{ref6} used Malliavin
calculus method to obtain quantitative Fisher information bounds for the multiple Wiener-It{\^o} integrals. This provides a basis for Dung and Hang \cite{ref15} to establish the bounds of the Fisher information distance to the class of normal distributions of Malliavin differentiable random variables. Furthermore, they study the rate of Fisher information convergence in the central limit theorem for the solution of small noise
stochastic differential equations (SDEs):
$$
X_{\varepsilon,t}=X_0+\int_0^tb(s,X_{\varepsilon,s})ds+\varepsilon\int_0^t\sigma(s,X_{\varepsilon,s})dW_s, ~~t\in[0,T],
$$
where the initial condition $X_0=x_0\in \mathbb{R}$, $b, \sigma: [0,T]\times\mathbb{R}\to\mathbb{R}$ are deterministic functions, $W$ is a standard Brownian motion and $\varepsilon\in(0,1)$ is a small parameter.

The above SDEs consider the case of distribution independent and can be used to characterize linear Fokker-Planck equations.
Generally, nonlinear Fokker-Planck equations can be characterised by distribution dependent SDEs, which are also named as McKean-Vlasov SDEs or mean field SDEs.
A distinct feature of such systems is the appearance of probability laws in the coefficients of the resulting equations.
Wang \cite{Wang18} established strong well-posedness of distribution dependent SDEs with one-sided Lipschitz continuous drifts and Lipschitz-continuous dispersion coefficients.
Under integrability conditions on distribution dependent coefficients, Huang and Wang \cite{HW19} obtained the existence and uniqueness for distribution dependent SDEs with non-degenerate noise.
Recently, Fan \emph{et al.} \cite{Fan22} proved the well-posedness of distributed dependent SDE driven by fractional Brownian motion (fBm) and then established the Bismut formulas for both non-degenerate and degenerate cases.
Galeati \emph{et al.} \cite{GHM} studied distributed dependent SDEs with irregular, possibly distributional drift, driven by additive fBm and established strong well-posedness under a variety of assumptions on the drifts.

To our knowledge, there is a certain distance between distribution dependent SDE and distribution independent SDE, and also significant gap in the estimation of Fisher information bound for SDE in these two cases driven by fBm.
Note that, the study of Fisher information convergence in the central limit theorem
for the solution of  distribution-dependent SDE driven by fBm with small noise has not been studied. Thus, in this paper we will consider the  following distribution dependent SDE driven by fBm with small noise:
\begin{equation}{\label{SDE}}
    X_{t,\varepsilon}=X_0+\int_0^t{b( s,X_{s,\varepsilon},\mathcal{L}_{X_{s,\varepsilon}} )}ds
    +\varepsilon^H \int_0^t{\sigma ( s,\mathcal{L}_{X_{s,\varepsilon}} )}dB_{s}^{H}, ~~X_0=x, ~t\in[0,T],
\end{equation}
where the initial condition $X_0=x_0$ is a real number, $\mathcal{L}_{X_{t,\varepsilon}}$ denotes the law of $X_{t,\varepsilon}$, $B^{H}$ is a fBm with Hurst index $H \in (\frac{1}{2},1)$ and $\varepsilon\in(0,1) $ is a small parameter.

For some $\theta \in [1,\infty)$, let $\mathcal{F}_{\theta}(\mathbb{R})$ be the space of probability measures on $\mathbb{R}$ with finite $\theta$-th
moments. We define the $L^\theta$-Wasserstein distance of any two probability measures $\mu, \nu \in \mathcal{F}_{\theta}(\mathbb{R}) $ by
\begin{equation}{\label{df of W distance}}
    \mathds{W}_{\theta}( \mu,\nu ) :=\inf_{\pi \in \ell ( \mu,\nu )}\left( \int_{\mathbb{R}\times \mathbb{R}}{|x-y|^{\theta}\pi ( dxdy )} \right) ^{1/\theta},
\end{equation}
where $\ell(\mu,\nu)$ is the set of probability measures on $\mathbb{R} \times \mathbb{R}$
with marginals $\mu$ and $\nu$.
In order to ensure the existence and uniqueness of the solution, we impose
the following assumptions on the coefficients $b: [0,T]\times\mathbb{R}\times \mathcal{F}_{\theta}(\mathbb{R})\to\mathbb{R}$ and $\sigma: [0,T]\times\mathcal{F}_{\theta}(\mathbb{R})\to\mathbb{R}\otimes\mathbb{R}$.
Throughout this paper, we use $|\cdot|$ and $\langle\cdot \rangle$ for the Euclidean norm and inner product, respectively, and for a matrix, $||\cdot||$ denotes the operator norm.

Let $f: \mathcal{F}_{2}(\mathbb{R})\to \mathbb{R}$, $f$ is called $L$-differentiable at $\mu\in \mathcal{F}_{2}(\mathbb{R})$, if the functional $L^2(\mathbb{R}\to\mathbb{R}, \mu)\ni \phi\mapsto f(\mu\circ(Id+\phi)^{-1})$ is Fr\'{e}chet differentiable at $0\in L^2(\mathbb{R}\to\mathbb{R}, \mu)$.
A function $g$ is called differentiable on $\mathbb{R}\times \mathcal{F}_{2}(\mathbb{R})$, if for any $(x,\mu) \in \mathbb{R}\times \mathcal{F}_{2}(\mathbb{R})$, $g(\cdot,\mu)$
is differentiable and $g(x,\cdot)$ is $L$-differentiable. Furthermore, if $\nabla g(\cdot, \mu)(x)$ and
$D^Lg(x,\cdot)(\mu)(y)$ are jointly continuous in $(x,y,\mu)\in\mathbb{R}\times\mathbb{R}\times \mathcal{F}_{2}(\mathbb{R})$, we denote $g\in C^{1,(1,0)}(\mathbb{R}\times \mathcal{F}_{2}(\mathbb{R}))$.
More details of $L$-differentiable see in \cite{Fan22, Fan23}.

\begin{enumerate}
  \item[(A1)] There exists a non-decreasing function $K(t)$, such that for any
$t \in [0,T]$, $x,y \in \mathbb{R}$, $\mu, \nu \in \mathcal{F}_{\theta}(\mathbb{R}) $,
\begin{equation*}{\label{A1 1}}
    |b( t,x,\mu ) -b( t,y,\nu ) |\le K( t ) ( |x-y|+\mathds{W}_{\theta}( \mu,\nu ) ) ,
\end{equation*}
\begin{equation*}{\label{A1 2}}
    \lVert \sigma ( t,\mu ) -\sigma ( t,\nu ) \rVert \le K( t )\mathds{W}_\theta(\mu,\nu) ,
\end{equation*}
and
\begin{equation*}{\label{A1 3}}
    |b(t,0,\delta_0)|+ \lVert \sigma(t,\delta_0)\rVert \le K(t),
\end{equation*}
where $\delta_0$ denotes the initial experience distribution.

\item[(A2)] For every $t\in[0,T], b(t,\cdot,\cdot)\in C^{1,(1,0)}(\mathbb{R}\times \mathcal{F}_{2}(\mathbb{R}))$, and there exists a non-decreasing
function $K_1(t)$ such that

(i) for any $x,y\in\mathbb{R}, \mu,\nu\in\mathcal{F}_{2}(\mathbb{R})$,
$$|\nabla b(t,\cdot,\mu)(x)|+|D^L b(t,x,\cdot)(\mu)(y)|\leq K_1(t).$$

(ii) for any $x,y,z_1,z_2\in\mathbb{R}, \mu,\nu\in\mathcal{F}_{2}(\mathbb{R})$, $b(t,x,\mu)$ is twice differentiable in $x$ and
\begin{align*}
&|\nabla b(t,\cdot,\mu)(x)-\nabla b(t,\cdot,\nu)(y)|+|D^Lb(t,x,\cdot)(\mu)(z_1)-D^Lb(t,y,\cdot)(\nu)(z_2)|\\
&\qquad\leq K_1(t)(|x-y|+|z_1-z_2|+\mathds{W}_{\theta}( \mu,\nu )).
\end{align*}
\end{enumerate}

By Fan et al. \cite{Fan23}, under the assumption (A1), there exists a unique function $\{x_t, t\in[0,T]\}$ such that $x_t\in C([0,T];\mathbb{R})$ and $x_t$ satisfies the deterministic equation
\begin{equation}{\label{ODE}}
x_t=x_0+\int_0^t{b( s,x_{s},\mathcal{L}_{x_{s}} )}ds, ~~t\in[0,T].
\end{equation}
Under the assumption (A2), as $\varepsilon\to0$,
$\tilde{X}_{t,\varepsilon}=\frac{X_{t,\varepsilon}-x_t}{\varepsilon^H}
$ converges in distribution to $Z_t$, for every $t \in [0,T]$, where
$Z_t$ satisfies
\begin{align*}
Z_t&=\int_0^t\nabla_{Z_s}b(s,\cdot,\mathcal{L}_{x_s})(x_s)ds+\int_0^t\left(\mathbb{E}(D^Lb(s,u,\cdot)(\mathcal{L}_{x_s})(x_s)Z_s)|_{u=x_s}\right)ds\\
&\qquad+\int_0^t\sigma(s,\mathcal{L}_{x_s})dB_s^H, ~~t\in[0,T].
\end{align*}
Note that
$$\mathbb{E}\left[ D^Lb(s,u,\cdot)(\mathcal{L}_{x_s})(x_s) Z_s\right]=\lim_{a\to0}\frac{b(s,u,\mathcal{L}_{x_s+aZ_s})-b(s,u,\mathcal{L}_{x_s})}{a}.$$

This result provides a foundation for our study of the convergence in Fisher information distance.
In this paper, we mainly focus on the following two results.
The first one (Theorem \ref{Th1}) is the order of convergence in Fisher information distance,
$$
I\left( \tilde{X}_{t,\varepsilon}||Z_t \right) \le C_{upper}(t,H)\varepsilon^{2H};
$$
The second one (Theorem \ref{Th2}) is that the rate $O(\varepsilon^{2H})$ is optimal,
$$
\lim_{\varepsilon \rightarrow 0}\frac{1}{\varepsilon ^{2H}}I\left( \tilde{X}_{t,\varepsilon}||Z_t \right) \ge C_{lower}(t,H).
$$

The rest of this paper is orgnized as follows. In Section \ref{Perliminaries}, we present some preliminary properties of fBm, Malliavin calculus and some basic propositions.
In Section \ref{M R}, we impose our main results.
The rate of Fisher information convergence for the solutons of \eqref{SDE} is given in Theorem \ref{Th1} and
Theorem \ref{Th2} proves that this rate is optimal. Section \ref{Po1} and Section \ref{Po2} provide detailed proofs for
Theorem \ref{Th1} and Theorem \ref{Th2}, respectively.

\section{Perliminaries}{\label{Perliminaries}}
In this section, we present some preliminary properties and some basic propositions that will be used in
the rest of our paper.

\subsection{\label{sec:level2}Fractional Brownian motion}
Let $\{B^H_t, t\geq0\}$ be a fBm with Hurst parameter $H\in(0,1)$. It is a centered Gaussian process with covariance function
$$\mathbb{E}(B_t^HB_s^H)=\frac12(t^{2H}+s^{2H}-|t-s|^{2H}),$$
for all $t,s\geq0$. This process was first introduced by Kolmogorov and studied by Mandelbrot and Van Ness \cite{Man68} and it became an object of intense study due to
its self-similarity, long range dependence and Gaussianity.
Note that $B^{\frac{1}{2}}_t$ is a standard Brownian motion. To ensure the existence and uniqueness of the solution in equation \eqref{SDE}, we only consider fBm with Hurst index $H>1/2$.

FBm  admits the following Wiener integral representation
\begin{equation*}{\label{FBM int}}
    B_{t}^{H} = \int_0^t{K_H(t,s)dW_s},
\end{equation*}
where $W$ is a standard Brownian motion an $K_H$ is the kernel function defined by
\begin{equation*}
    K_H(t,s)=C_H s^{\frac{1}{2}-H} \int_s^t{(u-s)^{H-\frac{2}{3}}u^{H-\frac{1}{2}}du}, \ \ s \le t,
\end{equation*}
with $C_H=\left(\frac{H(2H-1)}{\beta(2-2H,H-\frac{1}{2})}   \right)^{\frac{1}{2}}$ and $\beta$ denotes a beta function.
Moverover, we can see that
\begin{equation*}
    \frac{\partial K_H}{\partial t}(t,s)=c_H\left(H-\frac{1}{2}\right)(t-s)^{H-\frac{3}{2}}\left(\frac{s}{t}\right)^{\frac{1}{2}-H}.
\end{equation*}

Next, we will introduce the Cameron-Martin space (more details see in \cite{ref21}). The Cameron-Martin space $\mathcal{K}_H$ associated to the covariance
$\mathbb{E}[B_{t}^{H}B_{s}^{H}]$ is defined as the closure of the space of step functions with respect to the scalar product
\begin{equation*}
    \langle \phi ,\psi \rangle_{\mathcal{K}_H}=c_H\int_0^T{\int_0^T{\phi(u)\psi(u)|u-v|^{2H-2}dudv}}.
\end{equation*}

For any $\phi \in \mathcal{K}_H$, we can define an operator $K^*_H$,
\begin{equation}{\label{fBm 1}}
    K^*_H(\phi)(s)=\int_s^T{\phi(t)\frac{\partial K_H}{\partial t}(t,s)dt}.
\end{equation}
Then $ (K^*_H1_{[0,t]})(s)=K_H(t,s)1_{[0,t]}(s)$ and  we have an isometry between $\mathcal{K}_H$ and $L^2[0,T]$,
\begin{equation*}
    \left<\phi , \psi \right>_{\mathcal{K}_H}=\langle K_H^*(\phi), K_H^*(\psi)\rangle_{L^2[0,T]}.
\end{equation*}
Thus, the Wiener integral with respect to fBm $B^{H}$ can be rewritten as a Wiener integral with respect to Wiener process $W$
\begin{equation}{\label{fBm 2}}
    \int_0^T \phi(s)dB^{H}_s= \int_0^T(K_H^*\phi)(s)dW_s.
\end{equation}

\subsection{\label{sec:level2}Malliavin calculus}
In this section, let us recall some elements of Malliavin calculus (for more details see \cite{ref19}).
We suppose that $\{W_t, t \in [0,T]\}$ is defined on a complete probability space $(\Omega, \mathcal{F}, \mathbb{F}, P)$, where $\mathbb{F}=(\mathcal{F}_t)_{t\in[0,T]}$
is a natural filtration generated by the Brownian motion $W$.

Let $\mathcal{S}$ denotes a dense subset of $L^2(\Omega, \mathcal{F}, P)$
that consists of smooth random variables of the form
\begin{equation}{\label{sf}}
	F=f(W(h_1),W(h_2),\ldots,W(h_n)),
\end{equation}
where $n\in \mathbb{N}$, $f \in C_b^{\infty}(\mathbb{R}^n)$ and
$h_1,\ldots,h_n \in L^2[0,T]$. If $F$ has the form (\ref{sf}), we define
its Malliavin derivative as the process $DF:=D_tF$, $t\in [0,T]$ given by
\begin{equation*}
	D_tF = \sum_{k=1}^n{\frac{\partial f}{\partial x_k}(W(h_1),\ldots, W(h_n))h_k(t)}
	.\end{equation*}
More generally, for each $k \ge 1$, we can define the iterated derivative
operator on a cylindrical random variable by setting
\begin{equation*}
	D_{t_1,\ldots,t_k}^kF=D_{t_1}\cdots D_{t_k}F=\sum_{i_i,...,i_{k=1}}^n\frac{\partial ^kf}{\partial x_{i_1}\cdots\partial x_{i_k}}( W( h_1 ) ,...,W( h_n ) ) h_{i_1}(t_1)\otimes \cdots \otimes h_{i_k}(t_k),
\end{equation*}
where $ h_{i_1}\otimes \cdots \otimes h_{i_k}$ denotes the $k$-fold tensor product of
$h_{i_1},\ldots,h_{i_k}$.

For any $1 \le p,k \le \infty $, we denote by $\mathbb{D}^{k,p}$ the closure
of $\mathcal{S}$ with respect to the norm
\begin{equation*}
	\lVert F \rVert _{k,p}^{p}:=\mathbb{E}|F|^p+\sum_{i=1}^{k}\mathbb{E}(||D^kF||^p_{(L^2[0,T])^{\otimes i}}).
\end{equation*}
Observe that, by computing the Malliavin derivative of $X_{t,\varepsilon}$, combining \eqref{fBm 1} and \eqref{fBm 2}, we can easily get:
\begin{equation}{\label{p1 eq2}}
	D_rX_{t,\varepsilon}=\int_r^t{{\nabla b( s,\cdot,\mathcal{L}_{X_{s,\varepsilon}})(X_{s,\varepsilon})} D_rX_{s,\varepsilon}  ds}+{ \int_r^t{\varepsilon^H \sigma ( s,\mathcal{L}_{X_{s,\varepsilon}}) \frac{\partial K_H( s,r )}{\partial s}ds}  }
\end{equation}
and
\begin{align}{\label{p1 eq3}}
	D_{\theta}D_rX_{t,\varepsilon}
	=\int_{r\lor \theta}^t\Big(\nabla b( s,\cdot,\mathcal{L}_{X_{s,\varepsilon}})(X_{s,\varepsilon}) D_{\theta}D_rX_{s,\varepsilon}
	 +\nabla^2 b( s,\cdot,\mathcal{L}_{X_{s,\varepsilon}})(X_{s,\varepsilon})  D_rX_{s,\varepsilon}D_{\theta}X_{s,\varepsilon}\Big)   ds.
\end{align}

For any $F\in \mathbb{D}^{1,2}$, the Clark-Ocone formula says that
\begin{equation*}
	F-\mathbb{E}[F]=\int_0^T{\mathbb{E}[D_sF|\mathcal{F}_s]dW_s}.
\end{equation*}

An important operator in the Malliavin calculus theory is
the divergence operator $\delta$. It is defined as the adjoint of the
derivative operator $D$ in the following manner. The domain of
$\delta$ denoted by $Dom\delta$ which is the set of all functions
$u \in L^2(\Omega \times [0,T])$ such that
\begin{equation*}
	|\mathbb{E}(\left< DF,u  \right>_{L^2[0,T]})|\le C(u)||F||_{L^2(\Omega)},
\end{equation*}
where $C(u)$ is some positive constant depending on $u$.
In particular, if $u\in Dom\delta$, then $\delta(u)$ is characterized
by following relationships
\begin{equation}\label{dual 1}
	\delta(uF)=F\delta(u)-\left< DF,u  \right>_{L^2[0,T]},
\end{equation}
and
\begin{equation*}{\label{dual 2}}
	\mathbb{E}[\left< DF,u  \right>_{L^2[0,T]}]=\mathbb{E}[F\delta(u)], \ \ \ \ \forall F \in \mathbb{D}^{1,2}.
\end{equation*}

\subsection{\label{sec:level2}Some propositions}
In this section, we will introduce some propositions that will be used in the proof of our main results.

\begin{proposition}\label{Lemma 2}(\cite{Fan23}, Lemma 4.1) Suppose that $\sigma$ satisfies (A1) and $\mu \in C([0,T]; \mathcal{F}_p(\mathbb{R}^d))$ with $p \ge \theta$
and $p > \frac{1}{H}$. Then there is a constant $C \ge 0$ such that
\begin{equation}{\label{Lemma2 eq1}}
	\mathbb{E}\left(\sup_{t\in [0,T]}\left|\int_0^t \sigma(s,\mu_s)dB^H_s\right|^p\right)
	\le C \int_0^T||\sigma(s,\mu_s)||^pds.
\end{equation}
\end{proposition}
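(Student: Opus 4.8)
\noindent\emph{Proof proposal.}\quad The feature to exploit is that $\mu$ is a \emph{fixed deterministic} curve in $C([0,T];\mathcal{F}_{p}(\mathbb{R}^{d}))$, so that $g(s):=\sigma(s,\mu_s)$ is a deterministic (matrix-valued) function and, consequently, $Y_t:=\int_0^t\sigma(s,\mu_s)\,dB_s^H$ is a \emph{centered Gaussian} process with $Y_0=0$. We may assume the right-hand side of \eqref{Lemma2 eq1} is finite (otherwise there is nothing to prove), whence $g\in L^{p}([0,T])\subset L^{1/H}([0,T])$, the inclusion holding because $p>1/H$ and $[0,T]$ is bounded; in particular all the Wiener integrals below are well defined. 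First I would establish the increment estimate
$$
\mathbb{E}\Big\|\int_a^b\sigma(s,\mu_s)\,dB_s^H\Big\|^{2}\le C_H\int_a^b\!\!\int_a^b\|g(u)\|\,\|g(v)\|\,|u-v|^{2H-2}\,du\,dv\le C_H\Big(\int_a^b\|g(s)\|^{1/H}\,ds\Big)^{2H},\qquad 0\le a\le b\le T,
$$
by combining the covariance identity $\mathbb{E}\big[(\int\phi\,dB^H)(\int\psi\,dB^H)\big]=\langle\phi,\psi\rangle_{\mathcal{K}_H}=c_H\int_0^T\!\int_0^T\phi(u)\psi(v)|u-v|^{2H-2}\,du\,dv$ that follows from \eqref{fBm 2} and the definition of $\mathcal{K}_H$ (used componentwise, distinct coordinates of $B^H$ being independent), the equivalence of the operator and Hilbert--Schmidt norms in finite dimension, and the Hardy--Littlewood--Sobolev inequality with exponent $2-2H\in(0,1)$.

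Since $\int_a^b\sigma(s,\mu_s)\,dB_s^H$ is Gaussian, the equivalence of its $L^q$ and $L^2$ norms promotes the last bound to
$$
\mathbb{E}\Big\|\int_a^b\sigma(s,\mu_s)\,dB_s^H\Big\|^{p}\le C_{p,H}\,\big(m(b)-m(a)\big)^{pH},\qquad m(t):=\int_0^t\|g(s)\|^{1/H}\,ds .
$$
Here $m$ is continuous and nondecreasing, $m(0)=0$, and $pH>1$. I would then convert this into the maximal bound by a Garsia--Rodemich--Rumsey argument run against the control $m$; concretely, after the time change $\widetilde Y_u:=Y_{m^{-1}(u)}$ on $[0,m(T)]$ (the non-strict monotonicity of $m$ being harmless: replace $m(t)$ by $m(t)+\delta t$ and let $\delta\downarrow0$ at the end), the process $\widetilde Y$ satisfies $\mathbb{E}\|\widetilde Y_u-\widetilde Y_v\|^{p}\le C_{p,H}|u-v|^{pH}$. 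Feeding this into GRR with $\Psi(x)=x^p$ and gauge $\psi(r)=r^{\gamma}$ for any $\gamma$ in the range $2/p<\gamma<H+1/p$ --- which is nonempty \emph{precisely because} $p>1/H$ --- and evaluating the two elementary integrals that appear, one arrives at
\begin{align*}
\mathbb{E}\Big(\sup_{t\in[0,T]}\Big\|\int_0^t\sigma(s,\mu_s)\,dB_s^H\Big\|^{p}\Big)
&=\mathbb{E}\Big(\sup_{0\le u\le m(T)}\|\widetilde Y_u\|^{p}\Big)\\
&\le C_{p,H}\,m(T)^{pH}=C_{p,H}\Big(\int_0^T\|\sigma(s,\mu_s)\|^{1/H}\,ds\Big)^{pH}.
\end{align*}

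It remains to trade the exponent $1/H$ for $p$ on the right: since $1/H<p$, H\"older's inequality on $[0,T]$ gives $\int_0^T\|g(s)\|^{1/H}\,ds\le T^{\,1-1/(pH)}\big(\int_0^T\|g(s)\|^{p}\,ds\big)^{1/(pH)}$, hence $\big(\int_0^T\|g\|^{1/H}\,ds\big)^{pH}\le T^{pH-1}\int_0^T\|\sigma(s,\mu_s)\|^{p}\,ds$, and combining with the previous display proves \eqref{Lemma2 eq1} with $C=C_{p,H}\,T^{pH-1}$. The only genuinely delicate step is the maximal estimate: one cannot simply invoke Burkholder--Davis--Gundy, because after writing $\int_0^t\sigma(s,\mu_s)\,dB_s^H$ as a Wiener integral against the underlying Brownian motion $W$ via \eqref{fBm 2} the integrand depends on the upper limit $t$, so the process is not a martingale in $t$; it is the reparametrization by $m$ followed by GRR (equivalently, a quantitative Kolmogorov--Chentsov estimate) at exponent $p$ --- legitimate exactly under the hypothesis $p>1/H$ --- that yields the supremum bound with the \emph{linear}, rather than superlinear, dependence on $\int_0^T\|\sigma(s,\mu_s)\|^{p}\,ds$. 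The Hardy--Littlewood--Sobolev and Gaussian-moment ingredients are routine.
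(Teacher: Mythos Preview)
The paper does not supply its own proof of this proposition: it is quoted verbatim from \cite{Fan23}, Lemma~4.1, and used as a black box. So there is no in-paper argument to compare against; what can be assessed is whether your self-contained proof is sound and whether it is a reasonable stand-in for the cited one.

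Your argument is correct. The chain Hardy--Littlewood--Sobolev (for the $|u-v|^{2H-2}$ kernel, critical exponent $1/H$) $\Rightarrow$ Gaussian hypercontractivity $\Rightarrow$ increment bound $\mathbb{E}\|Y_b-Y_a\|^{p}\le C\big(m(b)-m(a)\big)^{pH}$ is exactly the mechanism that produces the right exponent, and the reparametrization by the control $m(t)=\int_0^t\|g\|^{1/H}$ followed by Garsia--Rodemich--Rumsey with $\Psi(x)=x^{p}$, $\psi(r)=r^{\gamma}$, $2/p<\gamma<H+1/p$, is the clean way to turn this into a maximal inequality; the nonemptiness of that window is precisely $p>1/H$, as you note. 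Your observation that BDG is unavailable here because the Volterra integrand $K_H^*$ in \eqref{fBm 2} depends on the upper limit $t$ is the key conceptual point and is well taken. The final H\"older step trading $\big(\int\|g\|^{1/H}\big)^{pH}$ for $T^{pH-1}\int\|g\|^{p}$ is routine and produces a constant $C=C_{p,H}T^{pH-1}$, consistent with the statement. Two minor cosmetic remarks: the matrix/multidimensional bookkeeping (componentwise covariance, equivalence of matrix norms) is indeed harmless but could be spelled out in one extra line; and the $\delta\downarrow 0$ regularization of $m$ is unnecessary if one simply observes that on any interval where $m$ is constant $g\equiv0$ and hence $Y$ is constant there, so the generalized inverse $m^{-1}$ already suffices.
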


The next proposition is a general bound of Fisher information which is very important to
the proofs of our main results.
\begin{proposition}\label{Lemma1}(\cite{ref15}, Theorem 3.1) Let $F \in \mathbb{D}^{2,4}$ and N be a normal
	random variable and $N \sim \mathcal{N}(\mu,\sigma^2)$. Define
	\begin{equation*}
		\Theta :=\left< DF,u \right> _{L^2[ 0,T ]},
	\end{equation*}
	where $u_t:=\mathbb{E}[D_tF|\mathcal{F}_t]$, $t \in [0,T]$. Assume that
	$\Theta \ne 0$ $a.s.$ Then, we have
	\begin{equation*}\label{Th1 eq1}
		I( F||N ) \le c\left( \frac{1}{\sigma ^4}\left( \mathbb{E}[ F ] -\mu \right) ^2+A_F|Var\left( F \right) -\sigma ^2|^2+C_F\left( \mathbb{E}\lVert D\Theta \rVert _{L^2[ 0,T ]}^{4} \right) \right) ,
	\end{equation*}
	where $c$ is an absolute constant and $A_F$, $C_F$ are positive constants
	given by
	\begin{equation*}
		A_F:=\frac{1}{\sigma ^4}\left( \mathbb{E}\lVert u \rVert _{L^2[ 0,T ]}^{8}\mathbb{E}|\Theta |^{-8} \right) ^{\dfrac{1}{4}},
	\end{equation*}
	\begin{equation*}
		C_F:=A_F+\left( \mathbb{E}\lVert u \rVert _{L^2[ 0,T ]}^{8}\mathbb{E}|\Theta |^{-16} \right) ^{\dfrac{1}{4}}.
	\end{equation*}
\end{proposition}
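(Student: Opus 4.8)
As Proposition~\ref{Lemma1} is quoted from \cite{ref15} (Theorem~3.1 there), the plan below only sketches how one would prove it by Malliavin--Stein arguments. The starting point is the definition $I(F\|N)=\mathbb{E}\big[(\rho_F(F)+(F-\mu)/\sigma^2)^2\big]$ together with a Malliavin representation of the score function: under the stated hypotheses the density $p_F$ exists, is absolutely continuous, $u/\Theta\in\mathrm{Dom}\,\delta$, and
\[\rho_F(F)=-\,\mathbb{E}\big[\delta(u/\Theta)\,\big|\,F\big].\]
One obtains this by testing against $\varphi\in C_b^1$: on one side $\mathbb{E}[\varphi'(F)]=-\mathbb{E}[\varphi(F)\rho_F(F)]$ by integration by parts in the density variable; on the other side $\varphi'(F)=\langle D\varphi(F),u\rangle/\Theta$, so the duality between $D$ and $\delta$ gives $\mathbb{E}[\varphi'(F)]=\mathbb{E}[\langle D\varphi(F),u/\Theta\rangle]=\mathbb{E}[\varphi(F)\delta(u/\Theta)]$.

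Next I would make $\delta(u/\Theta)$ explicit. The product rule \eqref{dual 1} with $1/\Theta$ in place of $F$ gives $\delta(u/\Theta)=\Theta^{-1}\delta(u)+\Theta^{-2}\langle D\Theta,u\rangle$, and the Clark--Ocone formula yields $\delta(u)=F-\mathbb{E}[F]$ since $u_t=\mathbb{E}[D_tF|\mathcal{F}_t]$. Substituting, and writing $(F-\mu)/\sigma^2=(F-\mathbb{E}[F])/\sigma^2+(\mathbb{E}[F]-\mu)/\sigma^2$, produces
\[\rho_F(F)+\frac{F-\mu}{\sigma^2}=\frac{\mathbb{E}[F]-\mu}{\sigma^2}+(F-\mathbb{E}[F])\,\mathbb{E}\Big[\tfrac{1}{\sigma^2}-\tfrac{1}{\Theta}\,\Big|\,F\Big]-\mathbb{E}\Big[\tfrac{\langle D\Theta,u\rangle}{\Theta^2}\,\Big|\,F\Big].\]
Applying $(a+b+c)^2\le 3(a^2+b^2+c^2)$ and taking expectations, the first term contributes $3\sigma^{-4}(\mathbb{E}[F]-\mu)^2$. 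For the second term, conditional Jensen removes the inner conditional expectation, one writes $\tfrac{1}{\sigma^2}-\tfrac{1}{\Theta}=\tfrac{\Theta-\sigma^2}{\sigma^2\Theta}$ and splits $\Theta-\sigma^2=(\Theta-\mathbb{E}[\Theta])+(\mathrm{Var}(F)-\sigma^2)$ using $\mathbb{E}[\Theta]=\mathbb{E}[F\delta(u)]=\mathrm{Var}(F)$; the deterministic piece produces $A_F|\mathrm{Var}(F)-\sigma^2|^2$ after a H\"older split into $\mathbb{E}\|u\|_{L^2}^8$ (with $\mathbb{E}|F-\mathbb{E}[F]|^p$ bounded through Burkholder--Davis--Gundy applied to the It{\^o} integral $\delta(u)=\int u\,dW$) and the negative moment $\mathbb{E}|\Theta|^{-8}$, while the fluctuation $\Theta-\mathbb{E}[\Theta]$ is controlled by an $L^4$ Poincar\'e (equivalently Clark--Ocone) estimate by $\mathbb{E}\|D\Theta\|_{L^2}^4$. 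For the third term, conditional Jensen and Cauchy--Schwarz give the bound $\mathbb{E}\big[\|D\Theta\|_{L^2}^2\,\|u\|_{L^2}^2/\Theta^4\big]$, and a further H\"older split into $\mathbb{E}\|D\Theta\|_{L^2}^4$, $\mathbb{E}\|u\|_{L^2}^8$ and $\mathbb{E}|\Theta|^{-16}$ yields the remaining part of $C_F\,\mathbb{E}\|D\Theta\|_{L^2}^4$; the numerical constants are absorbed into the absolute constant $c$.

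The step I expect to be the main obstacle is the first one: rigorously justifying that $F\in\mathbb{D}^{2,4}$ with $\Theta\neq0$ a.s. forces $u/\Theta\in\mathrm{Dom}\,\delta$ and $p_F$ to be weakly differentiable, so that the representation $\rho_F(F)=-\mathbb{E}[\delta(u/\Theta)\,|\,F]$ is legitimate --- this is exactly where the integrability order $\mathbb{D}^{2,4}$ and the finiteness of the negative moments of $\Theta$ enter. A second, more computational, difficulty is matching the constants precisely: turning $|\Theta-\sigma^2|$ moments into $|\mathrm{Var}(F)-\sigma^2|^2$ plus a $\mathbb{E}\|D\Theta\|_{L^2}^4$ correction via the Poincar\'e-type inequality, and choosing the H\"older exponents so that the auxiliary factors assemble exactly into the stated forms of $A_F$ and $C_F$.
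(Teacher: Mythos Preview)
The paper does not prove this proposition at all; it is merely stated as a quotation of Theorem~3.1 in \cite{ref15} and then used as a black box in the proof of Theorem~\ref{Th1}. Your recognition of this fact is correct, and the Malliavin--Stein sketch you outline (score representation $\rho_F(F)=-\mathbb{E}[\delta(u/\Theta)\mid F]$, the product rule for $\delta$, Clark--Ocone to get $\delta(u)=F-\mathbb{E}[F]$, then the split $\Theta-\sigma^2=(\Theta-\mathbb{E}[\Theta])+(\mathrm{Var}(F)-\sigma^2)$ with H\"older/Poincar\'e estimates) is indeed the argument used in \cite{ref15}, so there is nothing further to compare.
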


Throughout this papar, $C$ denotes some positive constant which may
change from line to line.

\section{Main results}{\label{M R}}

\begin{theorem}{\label{Th1}}Let $\{X_{t,\varepsilon}, t \in [0,T]\}$ and $\{x_t, t \in [0,T]\}$	be the solutions to the equations \eqref{SDE} and \eqref{ODE}, respectively.
	Define
	\begin{equation*}
		\tilde{X}_{t,\varepsilon}=\frac{X_{t,\varepsilon}-x_t}{\varepsilon^H}
		,\ \ t \in [0,T].
	\end{equation*}
	Suppose the assumptions (A1) and (A2) hold and that for some $p_0>16$,
	\begin{equation}{\label{NonDec}}
		\left| \int_0^t{\left( \int_r^t{\sigma ( s,\mathcal{L}_{X_{s,\varepsilon}}) \frac{\partial K_H( s,r )}{\partial s}ds} \right) ^2dr} \right|^{-p_0}<\infty,\ \forall \varepsilon\in (0,1),\ t\in(0,T].
	\end{equation}
	Then, for all $\varepsilon$ is a small number
	and $t \in (0,T]$, we have
	\begin{align*}
		I( \tilde{X}_{t,\varepsilon}||Z_t )&\le C(t,H)\left( \frac{1}{\text{Var}( Z_t )^2}+\frac{1}{\text{Var}( Z_t )^2}\left( \left| \int_0^t{\left( \int_r^t{\sigma ( s,\mathcal{L}_{X_{s,\varepsilon}}) \frac{\partial K_H( s,r )}{\partial s}ds} \right) ^2dr} \right|^{-p_0} \right) ^{\frac{2}{p_0}}\right.\\
		&\ \ \ \ \ \ \ \ \  \ \ \ \ \ \left.+\left( \left| \int_0^t{\left( \int_r^t{\sigma ( s,\mathcal{L}_{X_{s,\varepsilon}}) \frac{\partial K_H( s,r )}{\partial s}ds} \right) ^2dr} \right|^{-p_0} \right) ^{\frac{4}{p_0}} \right) \varepsilon ^{2H}
		.
	\end{align*}
	where $C(t,H)$ is a positive constant only
	depending on $t$ and $H$, $Z_t$ satisfies
	\begin{align}{\label{Zt}}
		Z_t&=\int_0^t\nabla_{Z_s}b(s,\cdot,\mathcal{L}_{x_s})(x_s)ds+\int_0^t\left(\mathbb{E}(D^Lb(s,u,\cdot)(\mathcal{L}_{x_s})(x_s)Z_s)|_{u=x_s}\right)ds \notag\\
		&\qquad+\int_0^t\sigma(s,\mathcal{L}_{x_s})dB_s^H, ~~t\in[0,T].
		\end{align}
\end{theorem}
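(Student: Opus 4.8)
The plan is to apply Proposition~\ref{Lemma1} directly to the random variable $F = \tilde X_{t,\varepsilon}$ with the Gaussian reference $N = Z_t$ (note $Z_t$ is a centred Gaussian since the two drift terms in \eqref{Zt} are linear in $Z$, so it is indeed $\mathcal N(0,\mathrm{Var}(Z_t))$). The proof thus reduces to controlling each of the three contributions appearing in the bound of Proposition~\ref{Lemma1}: the mean discrepancy $(\mathbb E[\tilde X_{t,\varepsilon}]-0)^2$, the variance discrepancy $|\mathrm{Var}(\tilde X_{t,\varepsilon})-\mathrm{Var}(Z_t)|^2$, and the Malliavin term $\mathbb E\|D\Theta\|_{L^2[0,T]}^4$, while also bounding the prefactors $A_F$ and $C_F$, which requires negative moment bounds on $\Theta = \langle D\tilde X_{t,\varepsilon}, u\rangle_{L^2[0,T]}$ and positive moment bounds on $\|u\|_{L^2[0,T]}$.

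First I would record the SDE satisfied by $\tilde X_{t,\varepsilon}$: subtracting \eqref{ODE} from \eqref{SDE} and dividing by $\varepsilon^H$ gives
\begin{equation*}
\tilde X_{t,\varepsilon} = \int_0^t \frac{b(s,X_{s,\varepsilon},\mathcal L_{X_{s,\varepsilon}}) - b(s,x_s,\mathcal L_{x_s})}{\varepsilon^H}\,ds + \int_0^t \sigma(s,\mathcal L_{X_{s,\varepsilon}})\,dB_s^H.
\end{equation*}
Using (A2) and a first-order Taylor expansion of $b$ in both the spatial and measure arguments, the drift integrand is $\nabla b(s,\cdot,\mathcal L_{x_s})(x_s)\tilde X_{s,\varepsilon} + \mathbb E[D^Lb(s,u,\cdot)(\mathcal L_{x_s})(x_s)\tilde X_{s,\varepsilon}]|_{u=x_s}$ plus a remainder that is $O(\varepsilon^H)$ in every $L^p$ by the Lipschitz bounds on $\nabla b$, $D^Lb$ in (A2)(ii) together with the moment estimate $\mathbb E\sup_t|X_{t,\varepsilon}-x_t|^p \le C\varepsilon^{pH}$ (which follows from Proposition~\ref{Lemma 2} and Gronwall). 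Comparing with \eqref{Zt}, Gronwall then yields $\mathbb E|\tilde X_{t,\varepsilon}-Z_t|^p \le C\varepsilon^{pH}$ for each $p$. From this, $|\mathbb E[\tilde X_{t,\varepsilon}]| = |\mathbb E[\tilde X_{t,\varepsilon}] - \mathbb E[Z_t]| \le C\varepsilon^H$, so the first term is $O(\varepsilon^{2H})$; and $|\mathrm{Var}(\tilde X_{t,\varepsilon}) - \mathrm{Var}(Z_t)| \le C\varepsilon^H$ by the standard bound $|\mathrm{Var}(A)-\mathrm{Var}(B)| \le (\|A\|_2+\|B\|_2+\|A\|_2\|B\|_2)\|A-B\|_2$ (roughly), so the second term is also $O(\varepsilon^{2H})$, up to the prefactor $A_F$.

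Next I would bound $\|u\|_{L^2[0,T]}$ and the negative moments of $\Theta$. From \eqref{p1 eq2}, $D_r\tilde X_{t,\varepsilon} = \int_r^t \nabla b(s,\cdot,\mathcal L_{X_{s,\varepsilon}})(X_{s,\varepsilon})D_r\tilde X_{s,\varepsilon}\,ds + \int_r^t \sigma(s,\mathcal L_{X_{s,\varepsilon}})\frac{\partial K_H(s,r)}{\partial s}\,ds$, so by Gronwall $D_r\tilde X_{t,\varepsilon} = g_\varepsilon(t,r) + \text{(bounded perturbation of it)}$ where $g_\varepsilon(t,r) := \int_r^t \sigma(s,\mathcal L_{X_{s,\varepsilon}})\frac{\partial K_H(s,r)}{\partial s}\,ds$; more precisely $\|D\tilde X_{t,\varepsilon}\|_{L^2[0,T]}$ is bounded above and below (up to constants $e^{\pm\|K_1\|T}$) by $\big(\int_0^t g_\varepsilon(t,r)^2\,dr\big)^{1/2}$. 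Since $u_r = \mathbb E[D_r\tilde X_{t,\varepsilon}|\mathcal F_r]$ and, crucially, the $b$-derivative factor in \eqref{p1 eq2} is deterministic-up-to-$X_{s,\varepsilon}$ (adapted), one shows $\Theta = \langle D\tilde X_{t,\varepsilon}, u\rangle$ is comparable to $\int_0^t g_\varepsilon(t,r)^2\,dr$, so that $\mathbb E|\Theta|^{-q} \le C\big(|\int_0^t g_\varepsilon(t,r)^2\,dr|^{-p_0}\big)^{q/p_0}$ for $q \le p_0$ by Jensen/Hölder, using exactly hypothesis \eqref{NonDec}; and $\mathbb E\|u\|_{L^2}^{8} \le C(\int_0^t g_\varepsilon^2\,dr)^4$ but this quantity is itself $O(\varepsilon^0)$ — wait, more carefully: $\|u\|_{L^2}^2 \asymp \int_0^t g_\varepsilon(t,r)^2 dr$, which contributes nothing singular. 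This gives $A_F \le C(t,H)\,\mathrm{Var}(Z_t)^{-2}\big(|\int_0^t g_\varepsilon^2 dr|^{-p_0}\big)^{2/p_0}$ and $C_F \le A_F + C(t,H)\big(|\int_0^t g_\varepsilon^2 dr|^{-p_0}\big)^{4/p_0}$, matching the three bracketed terms in the statement once multiplied by the $\varepsilon^{2H}$ coming from the first two discrepancy terms and the $D\Theta$ term.

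Finally, for the Malliavin term: $D_\theta \Theta = D_\theta\langle D\tilde X_{t,\varepsilon}, u\rangle = \langle D_\theta D\tilde X_{t,\varepsilon}, u\rangle + \langle D\tilde X_{t,\varepsilon}, D_\theta u\rangle$, and using \eqref{p1 eq3} together with the boundedness of $\nabla b$, $\nabla^2 b$ from (A2) and the bounds on $D\tilde X_{t,\varepsilon}$ already obtained, one gets $\mathbb E\|D\Theta\|_{L^2}^4 \le C(t,H)\big(\int_0^t g_\varepsilon(t,r)^2\,dr\big)^{?}$ — the point being that $D\tilde X_{t,\varepsilon}$ carries no $\varepsilon$ (the $\varepsilon^H$ cancelled), so this is bounded by a constant $C(t,H)$, and the whole $C_F\cdot\mathbb E\|D\Theta\|^4$ term is then $O\big((|\int_0^t g_\varepsilon^2 dr|^{-p_0})^{4/p_0}\big)$, and one still owes a factor $\varepsilon^{2H}$. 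Here is where I expect the genuine subtlety, and the main obstacle: unlike the mean- and variance-discrepancy terms, the $D\Theta$ term does not obviously carry a factor $\varepsilon^{2H}$, since $D\tilde X_{t,\varepsilon}$ is $\varepsilon$-free. The resolution must be that $D\Theta$ is not just bounded but is itself $O(\varepsilon^H)$: indeed $D_\theta D_r\tilde X_{t,\varepsilon}$ in \eqref{p1 eq3} — rewritten for $\tilde X$ — the second-derivative term $\nabla^2 b \cdot D_r X_{s,\varepsilon} D_\theta X_{s,\varepsilon}$ when divided through by $\varepsilon^H$ becomes $\varepsilon^H \nabla^2 b \cdot D_r\tilde X_{s,\varepsilon} D_\theta \tilde X_{s,\varepsilon}$, carrying an explicit $\varepsilon^H$; Gronwall then gives $\|D_\theta D\tilde X_{t,\varepsilon}\|_{L^2} = O(\varepsilon^H)$, hence $\|D\Theta\|_{L^2} = O(\varepsilon^H)$ and $\mathbb E\|D\Theta\|_{L^2}^4 = O(\varepsilon^{4H}) = O(\varepsilon^{2H})$ with room to spare. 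Carefully tracking this cancellation of $\varepsilon$-powers through \eqref{p1 eq3}, and confirming that $Z_t$ being genuinely Gaussian makes the reference distribution admissible (so that "$I(F\|N)$" is the right object and Proposition~\ref{Lemma1} applies), is the crux; the remaining estimates are Gronwall arguments and Hölder's inequality against hypothesis \eqref{NonDec}. Collecting the three contributions and absorbing all $t$- and $H$-dependent constants into $C(t,H)$ gives the displayed inequality.
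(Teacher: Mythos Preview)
Your proposal is correct and follows essentially the same route as the paper: apply Proposition~\ref{Lemma1} to $F=\tilde X_{t,\varepsilon}$ and $N=Z_t$, control the mean and variance discrepancies via the $L^p$ bound $\mathbb E|\tilde X_{t,\varepsilon}-Z_t|^p\le C\varepsilon^{pH}$ (paper's Lemma~\ref{p4}), bound the negative moments of $\Theta$ by using the explicit solution of \eqref{p1 eq2} to get $\Theta\ge C\int_0^t g_\varepsilon(t,r)^2\,dr$ against hypothesis \eqref{NonDec} (Lemma~\ref{p5}), and obtain $\mathbb E\|D\Theta\|_{L^2}^4\le C\varepsilon^{4H}$ from the fact that $D_\theta D_r X_{t,\varepsilon}=O(\varepsilon^{2H})$ (Lemma~\ref{p2} and~\ref{p6}). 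Your key observation---that the second-derivative term in \eqref{p1 eq3}, rewritten for $\tilde X$, carries an explicit $\varepsilon^H$---is exactly the mechanism the paper exploits, and the ``room to spare'' you note ($\varepsilon^{4H}$ versus the stated $\varepsilon^{2H}$) is real and simply absorbed in the final bound.
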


Note that, if $\sigma=1$, the condition \eqref{NonDec} degenerates to $\Big(\int_0^tK_H^2(t,r)dr\Big)^{-p_0}=t^{-2p_0H}<\infty$.  Therefore, condition \eqref{NonDec} is feasible.

Theorem \ref{Th1} tells us, the rate of Fisher information convergence in the centeral limit theorem for the solution of \eqref{SDE} is $O(\varepsilon^2)$.
Moreover, the next theorem will prove that this rate is optimal.

\begin{theorem}{\label{Th2}}Under the assumptions (A1) and (A2), for each $t\in(0,T]$, we have
	\begin{equation}{\label{Th2 eq1}}
		\lim_{\varepsilon \rightarrow 0}\frac{1}{\varepsilon ^{2H}}I( \tilde{X}_{t,\varepsilon}||Z_t ) \ge \frac{1}{4||DU_t||_{L^2[0,T]}^4}\left( \mathbb{E}|\mathbb{E}\left[ \delta ( V_tDU_t) |U_t \right] _t| \right) ^2.
	\end{equation}
	where $Z_t$ is defined as in Theorem \ref{Th1},
	$(U_t)_{t\in[0,T]}$ and $(V_t)_{t\in[0,T]}$ are stochastic processes defined by
	\begin{equation}{\label{U_t}}
		U_t=\int_0^t{{\nabla b( s,\cdot,\mathcal{L}_{X_{s,\varepsilon}})(x_s) U_sds}}-\frac{1}{\varepsilon^H}\int_0^t{( b( s,x_s,\mathcal{L}_{X_{s,\varepsilon}} ) -b( s,x_s,\mathcal{L}_{x_s} ) ) ds}+\int_0^t{\sigma ( s,\mathcal{L}_{x_s}) dB_{s}^{H}},
	\end{equation}
	and
	\begin{align}{\label{V_t}}
		&V_t=\int_0^t{\frac{1}{2}{{\nabla^2 b( s,\cdot,\mathcal{L}_{X_{s,\varepsilon}})(x_s)}}U_{s}^{2}
			+{\nabla b( s,\cdot,\mathcal{L}_{X_{s,\varepsilon}})(x_s) V_s } }ds \notag \\
		&\ \ \ \ \ \ \ \ \ \ +\frac{1}{\varepsilon^H}\int_0^t{( \sigma ( s,\mathcal{L}_{X_{s,\varepsilon}}) -\sigma ( s,\mathcal{L}_{x_s}) ) dB_{s}^{H}}.
	\end{align}
\end{theorem}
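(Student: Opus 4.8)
The plan is to derive a lower bound for the Fisher information distance $I(\tilde X_{t,\varepsilon}\|Z_t)$ by exploiting the expansion $\tilde X_{t,\varepsilon}=U_t+\varepsilon^H V_t+o(\varepsilon^H)$ together with a variational (dual) characterisation of the Fisher information distance. Recall that for a random variable $F$ with score $\rho_F$ and a Gaussian $N\sim\mathcal N(\mu,\sigma^2)$ one has $I(F\|N)=\mathbb E[(\rho_F(F)+\sigma^{-2}(F-\mu))^2]$, and by Cauchy--Schwarz this is bounded below by $\sup_{g}\big(\mathbb E[g'(F)]-\sigma^{-2}\mathbb E[(F-\mu)g(F)]\big)^2/\mathbb E[g(F)^2]$ over smooth test functions $g$; equivalently, for $F$ Malliavin differentiable with $\Theta_F:=\langle DF,u^F\rangle_{L^2[0,T]}$ where $u^F_t=\mathbb E[D_tF\mid\mathcal F_t]$, one has the integration-by-parts identity $\mathbb E[g'(F)\Theta_F]=\mathbb E[g(F)\delta(u^F)]$. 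The strategy is to take $F=\tilde X_{t,\varepsilon}$, write everything in terms of $U_t$ and $V_t$, and then isolate the $O(\varepsilon^H)$ contribution of $V_t$, which is what survives after dividing by $\varepsilon^{2H}$ and letting $\varepsilon\to0$.

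First I would record the relation $X_{t,\varepsilon}-x_t = \varepsilon^H U_t$ exactly (this is how $U_t$ is constructed, comparing \eqref{SDE} with \eqref{ODE} after dividing by $\varepsilon^H$), so that $\tilde X_{t,\varepsilon}=U_t$; then I would Taylor-expand the drift term $b(s,X_{s,\varepsilon},\mathcal L_{X_{s,\varepsilon}})$ around $x_s$ to second order in $\varepsilon^H$, identify the first-order coefficient process as $U$ and the second-order coefficient process as $V$ (this is precisely the content of \eqref{U_t} and \eqref{V_t}), so that $U_t = W_t^{(0)} + \varepsilon^H V_t + R_{t,\varepsilon}$ where $W^{(0)}$ is the limiting Gaussian process $Z_t$ (after matching the linearised coefficients using (A2)) and $R_{t,\varepsilon}=o(\varepsilon^H)$ in the relevant norms. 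The key analytic inputs are: the a priori moment bounds from Proposition \ref{Lemma 2}, which control $\mathbb E\sup_t|U_t|^p$, $\mathbb E\sup_t|V_t|^p$ and the Malliavin norms uniformly in $\varepsilon$; the nondegeneracy $\|D U_t\|_{L^2[0,T]}^2>0$ a.s. with negative moments under control (an analogue of \eqref{NonDec}); and the Gronwall-type estimates coming from the linear structure of \eqref{p1 eq2}--\eqref{p1 eq3}.

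Next I would choose, in the variational lower bound for $I(\tilde X_{t,\varepsilon}\|Z_t)$, a near-optimal test function of the form $g_\varepsilon$ tuned so that the leading $O(1)$ and $O(\varepsilon^{H/?})$ Gaussian pieces cancel and the first nonvanishing term is of order $\varepsilon^H$, carried by $V_t$. Concretely, using $\mathbb E[g'(\tilde X_{t,\varepsilon})\Theta_{\tilde X_{t,\varepsilon}}]=\mathbb E[g(\tilde X_{t,\varepsilon})\delta(u^{\tilde X_{t,\varepsilon}})]$ and expanding $\Theta_{\tilde X_{t,\varepsilon}}=\|DU_t\|^2 + O(\varepsilon^H)$ and $\delta(u^{\tilde X_{t,\varepsilon}})=\delta(U_t^{\mathrm{adapted\ part}})+\varepsilon^H\delta(V_t D U_t \text{-type term})+\cdots$, one extracts after division by $\varepsilon^{2H}$ a limit of the form $\big(\mathbb E|\mathbb E[\delta(V_t DU_t)\mid U_t]|\big)^2/(4\|DU_t\|_{L^2[0,T]}^4)$; the factor $1/4$ comes from the square in Cauchy--Schwarz combined with the optimal scaling of the test function, and the inner conditional expectation $\mathbb E[\,\cdot\mid U_t]$ appears because only the projection onto $\sigma(U_t)$ contributes to the score of $U_t=\tilde X_{t,\varepsilon}$. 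I would make this rigorous by passing to the limit with dominated convergence, using the uniform moment and negative-moment bounds to justify interchange of limit and expectation.

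The main obstacle I anticipate is the uniform (in $\varepsilon$) control of the remainder $R_{t,\varepsilon}$ and of the Malliavin derivatives $DR_{t,\varepsilon}$, $D\Theta_{\tilde X_{t,\varepsilon}}$ in the negative-moment / $L^p$ norms needed to legitimately extract the $\varepsilon^{2H}$ coefficient and discard everything else; in particular, one must show $\varepsilon^{-2H}\mathbb E[R_{t,\varepsilon}\,(\text{score-type terms})]\to0$, which requires second-order Taylor estimates on $b$ with the Lipschitz-in-measure bounds of (A2)(ii) and careful use of the Wasserstein-distance contraction $\mathds W_\theta(\mathcal L_{X_{s,\varepsilon}},\mathcal L_{x_s})\le C\varepsilon^H$. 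A secondary difficulty is verifying that $\|DU_t\|_{L^2[0,T]}>0$ a.s.\ with enough integrability of $\|DU_t\|^{-1}$ so that the right-hand side of \eqref{Th2 eq1} is well defined and finite; this should follow from the explicit lower bound on $\int_0^t(\int_r^t\sigma(s,\mathcal L_{x_s})\frac{\partial K_H}{\partial s}(s,r)\,ds)^2dr$ analogous to the computation $\int_0^tK_H^2(t,r)\,dr=t^{2H}$ noted after Theorem \ref{Th1}, combined with Gronwall applied to \eqref{p1 eq2}.
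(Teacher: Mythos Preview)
Your proposal contains a significant misreading and also takes a different route from the paper.

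\textbf{On the misreading of $U_t$.} You assert that ``$X_{t,\varepsilon}-x_t=\varepsilon^H U_t$ exactly \ldots\ so that $\tilde X_{t,\varepsilon}=U_t$'', but this is false and contradicts your own opening sentence. Inspect \eqref{U_t}: all coefficients in that linear equation are \emph{deterministic} (the law $\mathcal L_{X_{s,\varepsilon}}$ is a fixed measure, and $x_s$ is deterministic), so $U_t$ is a Gaussian random variable, whereas $\tilde X_{t,\varepsilon}$ is not. The correct picture is the one you wrote first: $\tilde X_{t,\varepsilon}=U_t+\varepsilon^H V_t+o(\varepsilon^H)$ in $L^p$, with $U_t$ Gaussian and close in law to $Z_t$. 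Your subsequent expansion ``$U_t=W_t^{(0)}+\varepsilon^H V_t+R_{t,\varepsilon}$'' therefore does not make sense as written.

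\textbf{On the method.} The paper does \emph{not} use a Cauchy--Schwarz variational lower bound on $I(F\|N)$. Instead it invokes the classical inequality $\sqrt{I(F\|N)}\ge d_{TV}(F,N)$ (Pinsker/Stam, see \eqref{FI vs TV}) and then bounds the total variation distance. Since $U_t$ is Gaussian with (asymptotically) the mean and variance of $Z_t$, one has $d_{TV}(\tilde X_{t,\varepsilon},Z_t)\sim d_{TV}(\tilde X_{t,\varepsilon},U_t)=\tfrac12\sup_{|g|\le1}|\mathbb E[g(\tilde X_{t,\varepsilon})]-\mathbb E[g(U_t)]|$. The crux is Lemma~\ref{p10}: using a Malliavin representation for the difference $\mathbb E[g(\tilde X_{t,\varepsilon})]-\mathbb E[g(U_t)]$ (formula (3.2) of \cite{formula 3.2}), one shows
\[
\lim_{\varepsilon\to0}\frac{\mathbb E[g(\tilde X_{t,\varepsilon})]-\mathbb E[g(U_t)]}{\varepsilon^H}=\frac{1}{\|DU_t\|_{L^2[0,T]}^2}\,\mathbb E\big[g(U_t)\,\delta(V_tDU_t)\big],
\]
and then takes $g(x)=\mathrm{sgn}\big(\mathbb E[\delta(V_tDU_t)\mid U_t=x]\big)$ and squares. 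The factor $1/4$ is simply the square of the $1/2$ in the TV formula; the divergence $\delta(V_tDU_t)$ drops out of the representation formula, not from an expansion of $\Theta_{\tilde X}$ or of a score function.

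Your Cauchy--Schwarz route is not wrong in spirit, but it is harder: it forces you to work with $\mathbb E[g'(\tilde X_{t,\varepsilon})]$ and $\mathbb E[(\tilde X_{t,\varepsilon}-\mu)g(\tilde X_{t,\varepsilon})]$, hence to Taylor-expand $g$, $g'$, $g''$ in $\varepsilon^H$ and then perform two rounds of Malliavin integration by parts to recover $\delta(V_tDU_t)$; you also have to match the denominator $\mathbb E[g(\tilde X_{t,\varepsilon})^2]$ to produce the constant $1/4$. The paper's TV-distance shortcut avoids all of this and needs only bounded measurable test functions. The preparatory estimates you list (moment bounds on $U_t,V_t$, the $L^p$ convergence $\varepsilon^{-H}(\tilde X_{t,\varepsilon}-U_t)\to V_t$, and $\|D\tilde X_{t,\varepsilon}-DU_t\|_{L^2}\to0$) are exactly the lemmas the paper proves (Lemmas~\ref{p7}--\ref{p9}), so that part of your plan is on target; the gap is the final passage from these ingredients to the lower bound, where the paper's TV-distance argument is both shorter and delivers the stated constant directly.
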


Note that, as $\varepsilon\to0$, there exist some $\mu,\nu\in\mathcal{F}_{2}(\mathbb{R})$ and $c_1, c_2$, such that
$$\varepsilon^{-H}( b( s,x_s,\mathcal{L}_{X_{s,\varepsilon}} ) -b( s,x_s,\mathcal{L}_{x_s} ))\to c_1 D^Lb(s,x,\cdot)(\mu)$$
and
$$\varepsilon^{-H}( \sigma ( s,\mathcal{L}_{X_{s,\varepsilon}}) -\sigma ( s,\mathcal{L}_{x_s}) )\to c_2 D^L\sigma(s,\cdot)(\nu),$$
since \eqref{p3 eq1} and assumption (A1).
This ensures the rationality of the definitions of $U$ and $V$.

\section{Proof of Theorem \ref{Th1}} {\label{Po1}}
By the Proposition \ref{Lemma1}, we can see that the estimate of Fisher information distance between
$\tilde{X}_{t,\varepsilon}$ and $Z_t$ can be divided into three parts, that is the estimation of $\left( \mathbb{E}[ F ] -\mu \right) ^2$, $Var\left( F \right) -\sigma ^2$ and $\mathbb{E}\lVert D\Theta \rVert _{L^2[ 0,T ]}^{4}$. So that, our proof of Theorem \ref{Th1} can be divided into three estimates for each part. Here are some lemmas we need in our proof.

\begin{lemma} {\label{p1}}Under the assumption (A1), the equation
	\eqref{SDE} has a unique solution $\{X_{t,\varepsilon}, t \in [0,T]\}$
	satisfying for each $p \ge \theta$ and $p > \frac{1}{H}$,
	\begin{equation}{\label{p1 eq1}}
		\sup_{t\in [ 0,T ]}\mathbb{E}|X_{t,\varepsilon}|^p\le C_1(t,p,H),
	\end{equation}
	where $C_1(t,p,H)$ is a positive constant only depending on $t$, $p$ and $H$.
	
\end{lemma}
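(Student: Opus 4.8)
The plan is to establish existence, uniqueness, and the moment bound \eqref{p1 eq1} by a Picard iteration combined with a Gr\"onwall-type argument. First I would set up the iteration scheme $X^{(0)}_{t,\varepsilon}\equiv x_0$ and
$X^{(n+1)}_{t,\varepsilon}=x_0+\int_0^t b(s,X^{(n)}_{s,\varepsilon},\mathcal{L}_{X^{(n)}_{s,\varepsilon}})\,ds+\varepsilon^H\int_0^t\sigma(s,\mathcal{L}_{X^{(n)}_{s,\varepsilon}})\,dB^H_s$,
and show it is well defined and Cauchy in $L^p$; here the existence/uniqueness part can in fact be quoted from Fan et al.\ \cite{Fan22, Fan23}, since (A1) is exactly their standing hypothesis, so the real content is the uniform-in-$t$ moment estimate. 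For that I would raise both sides to the $p$-th power and split, using $|a+b+c|^p\le 3^{p-1}(|a|^p+|b|^p+|c|^p)$, into the contribution of $x_0$, the drift integral, and the stochastic integral.

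The drift term is handled by writing $b(s,X_{s,\varepsilon},\mathcal{L}_{X_{s,\varepsilon}})=b(s,X_{s,\varepsilon},\mathcal{L}_{X_{s,\varepsilon}})-b(s,0,\delta_0)+b(s,0,\delta_0)$, then invoking the Lipschitz bound and the linear-growth-type bound in (A1): $|b(s,X_{s,\varepsilon},\mathcal{L}_{X_{s,\varepsilon}})|\le K(s)(|X_{s,\varepsilon}|+\mathds{W}_\theta(\mathcal{L}_{X_{s,\varepsilon}},\delta_0))+K(s)\le K(s)(1+|X_{s,\varepsilon}|+(\mathbb{E}|X_{s,\varepsilon}|^\theta)^{1/\theta})$, using that $\mathds{W}_\theta(\mathcal{L}_{X_{s,\varepsilon}},\delta_0)=(\mathbb{E}|X_{s,\varepsilon}|^\theta)^{1/\theta}$. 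After H\"older in $s$ (to pull the power $p$ inside the time integral against the finite measure $ds$ on $[0,t]$) and the elementary inequality $\mathbb{E}|X_{s,\varepsilon}|^\theta\le 1+\mathbb{E}|X_{s,\varepsilon}|^p$, this yields a bound of the form $C\int_0^t(1+\mathbb{E}|X_{s,\varepsilon}|^p)\,ds$ with $C$ depending on $\sup_{[0,T]}K$, $p$, $T$. For the stochastic integral $\varepsilon^H\int_0^t\sigma(s,\mathcal{L}_{X_{s,\varepsilon}})\,dB^H_s$, I would apply Proposition \ref{Lemma 2} with the measure flow $\mu_s=\mathcal{L}_{X_{s,\varepsilon}}$ (which lies in $C([0,T];\mathcal{F}_p(\mathbb{R}))$ once we know finite $p$-th moments — this requires a small bootstrap, first proving the bound for the Picard iterates where finiteness is automatic, then passing to the limit), obtaining $\mathbb{E}\sup_{u\le t}|\int_0^u\sigma\,dB^H|^p\le C\int_0^t\|\sigma(s,\mathcal{L}_{X_{s,\varepsilon}})\|^p\,ds$, and then bounding $\|\sigma(s,\mathcal{L}_{X_{s,\varepsilon}})\|\le\|\sigma(s,\delta_0)\|+K(s)\mathds{W}_\theta(\mathcal{L}_{X_{s,\varepsilon}},\delta_0)\le K(s)(1+(\mathbb{E}|X_{s,\varepsilon}|^p)^{1/p})$ by (A1). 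Since $\varepsilon\in(0,1)$, the factor $\varepsilon^{pH}\le 1$ is harmless.

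Combining the three pieces gives $\mathbb{E}|X_{t,\varepsilon}|^p\le C(p,H,T)\big(1+\int_0^t\mathbb{E}|X_{s,\varepsilon}|^p\,ds\big)$ for a constant uniform in $t\in[0,T]$ and $\varepsilon\in(0,1)$; Gr\"onwall's inequality then yields $\sup_{t\in[0,T]}\mathbb{E}|X_{t,\varepsilon}|^p\le C(p,H,T)e^{C(p,H,T)T}=:C_1(t,p,H)$, which is exactly \eqref{p1 eq1}. I expect the main technical obstacle to be the logical ordering needed to apply Proposition \ref{Lemma 2}: that proposition presupposes $\mathcal{L}_{X_{s,\varepsilon}}\in C([0,T];\mathcal{F}_p(\mathbb{R}))$, i.e.\ presupposes the very moment control we are trying to prove, so the clean way is to run the whole estimate on the Picard approximations $X^{(n)}_{t,\varepsilon}$ (where each iterate is easily seen to have finite $p$-th moments by induction and continuity of $t\mapsto\mathcal{L}_{X^{(n)}_{t,\varepsilon}}$ in $\mathcal{F}_p$), obtain a Gr\"onwall bound uniform in $n$, and then take $n\to\infty$ with Fatou. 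A secondary point to check is the continuity in time of the measure flow (needed so the hypothesis of Proposition \ref{Lemma 2} literally applies), which follows from $L^p$-continuity of $t\mapsto X_{t,\varepsilon}$ together with $\mathds{W}_\theta\le\mathds{W}_p\le(\mathbb{E}|X_{t,\varepsilon}-X_{s,\varepsilon}|^p)^{1/p}$.
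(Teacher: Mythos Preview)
Your proposal is correct and follows essentially the same route as the paper: quote existence/uniqueness from \cite{Fan22}, split $\mathbb{E}|X_{t,\varepsilon}|^p$ into the drift and stochastic-integral contributions, control the drift via (A1) and $\mathds{W}_\theta(\mathcal{L}_{X_{s,\varepsilon}},\delta_0)^p\le\mathbb{E}|X_{s,\varepsilon}|^p$, control the stochastic integral via Proposition~\ref{Lemma 2} and (A1), and close with Gronwall. Your discussion of the circularity in applying Proposition~\ref{Lemma 2} (running the estimate on Picard iterates uniformly in $n$ and passing to the limit) is a point the paper glosses over, so in that respect your write-up is more careful than the original.
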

\begin{proof}	By Theorem 3.1 in \cite{Fan22}, under the assumption (A1), the equation 	\eqref{SDE} has a unique solution $\{X_{t,\varepsilon}, t \in [0,T]\}$ with the form
	\begin{equation*}
		X_{t,\varepsilon}=X_0+\int_0^t{b( s,X_{s,\varepsilon},\mathcal{L}_{X_{s,\varepsilon}})}ds+\varepsilon^H \int_0^t{\sigma ( s,\mathcal{L}_{X_{s,\varepsilon}})}dB_{s}^{H} .
	\end{equation*}
	Then, by \eqref{Lemma2 eq1} and the H{\"o}lder inequality, we have
	\begin{align*}
		\mathbb{E}|X_{t,\varepsilon}|^p
		&\le C+C{\mathbb{E}\left| \int_0^t{b( s,X_{s,\varepsilon},\mathcal{L}_{X_{s,\varepsilon}})}ds \right|}^p+C\mathbb{E}\left| \int_0^t{\sigma ( s,\mathcal{L}_{X_{s,\varepsilon}})}dB_{s}^{H} \right|^p\\
		&\le C+C\int_0^t{\mathbb{E}|b( s,X_{s,\varepsilon},\mathcal{L}_{X_{s,\varepsilon}}) |^p}ds+C\int_0^t{\lVert \sigma ( s,\mathcal{L}_{X_{s,\varepsilon}}) \rVert ^p}ds\\
		&\le C+C\int_0^t{\mathbb{E}\left( 1+|X_{s,\varepsilon}|^p+|\mathds{W}_{\theta}( \mathcal{L}_{X_{s,\varepsilon}},\delta _0 ) |^p \right) ds}+C\int_0^t\Big(1+|\mathds{W}_{\theta}( \mathcal{L}_{X_{s,\varepsilon}},\delta _0 ) |^p\Big)ds,
	\end{align*}
	where we use the assumption (A1) in the last inequality.
	By \eqref{df of W distance}, since $p \ge \theta$, we can use the H{\"o}lder inequality to obtain that
	\begin{equation*}
		|\mathds{W}_{\theta}( \mathcal{L}_{X_{s,\varepsilon}},\delta _0 ) |^p \le (\mathbb{E}|X_{s,\varepsilon}|^\theta)^{\frac{p}{\theta}} \le \mathbb{E}|X_{s,\varepsilon}|^p.
	\end{equation*}
	So that
	\begin{equation*}
		\mathbb{E}|X_{t,\varepsilon}|^p\le C+C\int_0^t{\mathbb{E}|X_{s,\varepsilon}|^p}ds.
	\end{equation*}
	Then, by Gronwall's inequality, we can obtain
	\begin{equation*}
		\mathbb{E}|X_{t,\varepsilon}|^p\le Ce^{ct}\le C.
	\end{equation*}
	This completes the proof.
\end{proof}

Note that, by the assumption (A1) and the H{\"o}lder inequality, we have
 \begin{align}{\label{boundness}}
       &\|\sigma(t,\mathcal{L}_{X_{t,\varepsilon}})\| \le \|\sigma(t,\mathcal{L}_{X_{t,\varepsilon}})-\sigma(t,\delta_0)\|+||\sigma(t,\delta_0)\| \notag \\
       & \ \ \ \le K(t)(1+\mathds{W}_\theta(\mathcal{L}_{X_{t,\varepsilon}},\delta_0))
	   \le K(t)(1+(\mathbb{E}|X_{t,\varepsilon}|^\theta)^{\frac{1}{\theta}}) .
 \end{align}
  Combine \eqref{boundness} and \eqref{p1 eq1}, $\sigma$
is a bounded function. In the rest of our paper, we put the boundness of $\sigma$ into assumption (A1).

\begin{lemma}{\label{p2}} Suppose the assumptions (A1) and (A2) hold.
	Then, for each $p \ge \theta$ and $p > \frac{1}{H}$, we have
	\begin{equation}{\label{p2 eq1}}
		\sup_{t\in [ 0,T ]}\mathbb{E}|D_rX_{t,\varepsilon}|^p\le C_2(t,p,H)\varepsilon ^{pH}
	\end{equation}
	and
	\begin{equation}{\label{p2 eq2}}
		\sup_{t\in [ 0,T ]}\mathbb{E}|D_{\theta}D_rX_{t,\varepsilon}|^p\le C_3(t,p,H)\varepsilon ^{2pH},
	\end{equation}
	for any $\varepsilon$ is a small number, where $C_2(t,p,H)$, $C_3(t,p,H)$ are positive constants only
	depending on $t$, $p$ and $H$.
	
\end{lemma}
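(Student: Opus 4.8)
The plan is to derive both moment bounds from the linear integral equations \eqref{p1 eq2} and \eqref{p1 eq3} satisfied by $D_rX_{t,\varepsilon}$ and $D_\theta D_rX_{t,\varepsilon}$, using a Gronwall-type argument after estimating the stochastic convolution term. For \eqref{p2 eq1}, I would start from
$$
D_rX_{t,\varepsilon}=\int_r^t \nabla b(s,\cdot,\mathcal{L}_{X_{s,\varepsilon}})(X_{s,\varepsilon})\,D_rX_{s,\varepsilon}\,ds+\varepsilon^H\int_r^t\sigma(s,\mathcal{L}_{X_{s,\varepsilon}})\frac{\partial K_H(s,r)}{\partial s}\,ds,
$$
take $p$-th moments, and split into the drift part and the (deterministic, given $\mathcal L_{X_{s,\varepsilon}}$) kernel part. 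The drift part is controlled by $\int_r^t K_1(s)^p\,\mathbb{E}|D_rX_{s,\varepsilon}|^p\,ds$ using (A2)(i). The kernel part is $\varepsilon^{pH}$ times a bound on $\big|\int_r^t\sigma(s,\mathcal L_{X_{s,\varepsilon}})\frac{\partial K_H}{\partial s}(s,r)\,ds\big|^p$; since $\sigma$ is bounded (by the remark following Lemma \ref{p1}) and $\int_r^t\frac{\partial K_H}{\partial s}(s,r)\,ds=K_H(t,r)$ with $\int_0^T K_H(t,r)^p\,dr<\infty$ for $p>1/H$, this term is bounded by $C(t,p,H)\varepsilon^{pH}$ uniformly in $r$. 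Then Gronwall in the variable $t$ (for fixed $r$, and taking $\sup_r$ at the end) gives \eqref{p2 eq1}.

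For \eqref{p2 eq2} I would proceed similarly from \eqref{p1 eq3}:
$$
D_\theta D_rX_{t,\varepsilon}=\int_{r\vee\theta}^t\Big(\nabla b(s,\cdot,\mathcal{L}_{X_{s,\varepsilon}})(X_{s,\varepsilon})D_\theta D_rX_{s,\varepsilon}+\nabla^2 b(s,\cdot,\mathcal{L}_{X_{s,\varepsilon}})(X_{s,\varepsilon})D_rX_{s,\varepsilon}D_\theta X_{s,\varepsilon}\Big)ds.
$$
Here there is no new stochastic integral; the inhomogeneous term is the $\nabla^2 b$ term, whose $p$-th moment I would bound using boundedness of $\nabla^2 b$ (which follows from (A2)(ii), as $\nabla b$ is Lipschitz in $x$, hence $\nabla^2 b$ is bounded) together with Cauchy--Schwarz and the already-established bound \eqref{p2 eq1}: $\mathbb{E}|D_rX_{s,\varepsilon}D_\theta X_{s,\varepsilon}|^p\le (\mathbb{E}|D_rX_{s,\varepsilon}|^{2p})^{1/2}(\mathbb{E}|D_\theta X_{s,\varepsilon}|^{2p})^{1/2}\le C\varepsilon^{2pH}$. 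The drift term again contributes $\int_{r\vee\theta}^t K_1(s)^p\,\mathbb{E}|D_\theta D_rX_{s,\varepsilon}|^p\,ds$, so Gronwall yields $\sup_t\mathbb{E}|D_\theta D_rX_{t,\varepsilon}|^p\le C(t,p,H)\varepsilon^{2pH}$, and again take $\sup$ over $r,\theta$.

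The main technical point — and the step I would be most careful about — is the estimate of the deterministic kernel integral $\int_r^t\sigma(s,\mathcal L_{X_{s,\varepsilon}})\frac{\partial K_H}{\partial s}(s,r)\,ds$ and verifying that its $p$-th power is integrable in $r$ over $[0,t]$ uniformly in $\varepsilon$; this relies on the explicit form $\frac{\partial K_H}{\partial t}(t,s)=c_H(H-\tfrac12)(t-s)^{H-3/2}(s/t)^{1/2-H}$, the boundedness of $\sigma$, and the fact that the singularity $(t-s)^{H-3/2}$ is integrable after one more integration precisely when $H>1/2$ (indeed $\int_r^t(s-r)^{H-3/2}ds\sim (t-r)^{H-1/2}$), together with $p>1/H$ to handle the $r^{1/2-H}$ factor near $0$. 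Everything else is a routine application of Hölder's inequality, the moment bound \eqref{p1 eq1}, the Wasserstein-moment comparison $\mathds W_\theta(\mathcal L_{X_{s,\varepsilon}},\delta_0)\le(\mathbb E|X_{s,\varepsilon}|^\theta)^{1/\theta}$, and Gronwall's lemma, so I would state these estimates and defer the elementary computations.
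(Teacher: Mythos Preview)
Your proposal is correct and follows essentially the same approach as the paper: take $p$-th moments of \eqref{p1 eq2} and \eqref{p1 eq3}, use (A2) to bound $\nabla b$ and $\nabla^2 b$, bound the kernel term via boundedness of $\sigma$ and $\int_r^t\frac{\partial K_H}{\partial s}(s,r)\,ds=K_H(t,r)$, then apply Gronwall; for the second derivative use Cauchy--Schwarz on $\mathbb{E}|D_rX_{s,\varepsilon}D_\theta X_{s,\varepsilon}|^p$ and feed in \eqref{p2 eq1}. The only difference is cosmetic: the paper simply asserts $|K_H(t,r)|^p\le C$ in one line, whereas you sketch a more careful integrability analysis of the kernel.
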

\begin{proof}
	Since the assumption (A1) and
	the H{\"o}lder inequality, we have
	\begin{align*}
		\mathbb{E}|D_rX_{t,\varepsilon}|^p&\le C\mathbb{E}\left| \int_r^t{{\nabla b( s,\cdot,\mathcal{L}_{X_{s,\varepsilon}} )(X_{s,\varepsilon})} D_rX_{s,\varepsilon}  ds} \right|^p\\
		&\ \ \ \ +C\mathbb{E}\left| {\left( \int_r^t{\varepsilon^H \sigma ( s,\mathcal{L}_{X_{s,\varepsilon}}) \frac{\partial K_H( s,r )}{\partial s}ds} \right) } \right|^p\\
		&\le C\int_r^t{\mathbb{E}\left[ {\left| \nabla b( s,\cdot,\mathcal{L}_{X_{s,\varepsilon}} )(X_{s,\varepsilon}) \right|^p}|D_rX_{s,\varepsilon}|^p \right]}ds+C\varepsilon ^{pH}\mathbb{E}\left| \int_r^t{\frac{\partial K_H( s,r )}{\partial s}ds} \right|^p\\
		&\le C\int_r^t{\mathbb{E}|D_rX_{s,\varepsilon}|^pds}+C\varepsilon ^{pH}|K_H( t,r )| ^p\\
		&\le C\int_r^t{\mathbb{E}|D_rX_{s,\varepsilon}|^pds}+C\varepsilon ^{pH},
	\end{align*}
	Then by Gronwall's inequality, we can obtain \eqref{p2 eq1}.
	It remains to prove \eqref{p2 eq2}. Recall the form of $D_\theta D_rX_{t.\varepsilon}$, that is \eqref{p1 eq3}, we have
	\begin{align*}
		\mathbb{E}|D_{\theta}D_rX_{t,\varepsilon}|^p
		&\le C\mathbb{E}\left| \int_{r\lor \theta}^t{{\nabla b( s,\cdot,\mathcal{L}_{X_{s,\varepsilon}} )(X_{s,\varepsilon})  D_{\theta}D_rX_{s,\varepsilon}  }}ds \right|^p\\
		&\qquad+C\mathbb{E}\left| \int_{r\lor \theta}^t{{{\nabla^2 b( s,\cdot,\mathcal{L}_{X_{s,\varepsilon}} )(X_{s,\varepsilon})  D_rX_{s,\varepsilon}D_{\theta}X_{s,\varepsilon}  }}ds} \right|^p.
	\end{align*}
	Then by the assumptions (A1)-(A2) and the H{\"o}lder inequality,
	\begin{align*}
		\mathbb{E}|D_{\theta}D_rX_{t,\varepsilon}|^p
		&\le C\int_{r\lor \theta}^t{\mathbb{E}\big[{{\left| \nabla b( s,\cdot,\mathcal{L}_{X_{s,\varepsilon}} )(X_{s,\varepsilon}) \right|^p}}|D_{\theta}D_rX_{s,\varepsilon}|^p \big] ds}\\
		&\qquad+C\int_{r\lor \theta}^t{\mathbb{E}\big[ {{\left| \nabla^2 b( s,\cdot,\mathcal{L}_{X_{s,\varepsilon}} )(X_{s,\varepsilon}) \right|^p}}|D_rX_{s,\varepsilon}D_{\theta}X_{s,\varepsilon}|^p \big] ds}\\
		&\le C\int_{r\lor \theta}^t{\mathbb{E}|D_{\theta}D_rX_{s,\varepsilon}|^p}ds+C\int_{r\lor \theta}^t{\mathbb{E}\big[ |D_rX_{s,\varepsilon}|^p|D_{\theta}X_{s,\varepsilon}|^p \big]}ds\\
		&\le C\int_{r\lor \theta}^t{\mathbb{E}|D_{\theta}D_rX_{s,\varepsilon}|^p}ds+C\int_{r\lor \theta}^t{\sqrt{\mathbb{E}|D_rX_{s,\varepsilon}|^{2p}\mathbb{E}|D_{\theta}X_{s,\varepsilon}|^{2p}}}ds.
	\end{align*}
	Consequently, we can use the estimate \eqref{p2 eq1} to get
	\begin{equation*}
		\mathbb{E}|D_{\theta}D_rX_{t,\varepsilon}|^p\le C\int_{r\lor \theta}^t{\mathbb{E}|D_{\theta}D_rX_{s,\varepsilon}|^p}ds+C\varepsilon ^{2pH}.
	\end{equation*}
	Thus, \eqref{p2 eq2} is obtained by Gronwall's inequality.
\end{proof}

The following lemma characterizes the distance relationship between $X_{t,\varepsilon}$ and $x_t$, which has been proven in \cite{Fan23}, so we directly present this result here.
\begin{lemma}{\label{p3}}(\cite{Fan23}, Lemma 4.2) Under the assumptions (A1), for each $p \ge \theta$ and $p > \frac{1}{H}$, we have
	\begin{equation}{\label{p3 eq1}}
		\mathbb{E}\left(\sup_{t\in [ 0,T ]}|X_{t,\varepsilon}-x_t|^{p}\right)\le C_4(t,p,H)\varepsilon ^{pH}\left(1+\sup_{t\in [ 0,T ]}|x_t|^p\right),
	\end{equation}
	for $\varepsilon$ is a small number and $t \in [0,T]$,
	where $C_4(t,p,H)$ is a positive constant
	depending on $t$, $p$ and $H$.
\end{lemma}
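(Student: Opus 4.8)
The plan is to control the difference process $Y_{t,\varepsilon} := X_{t,\varepsilon} - x_t$ directly by a Gronwall argument. Subtracting the deterministic equation \eqref{ODE} from the SDE \eqref{SDE} gives
\begin{equation*}
Y_{t,\varepsilon} = \int_0^t \big( b(s,X_{s,\varepsilon},\mathcal{L}_{X_{s,\varepsilon}}) - b(s,x_s,\mathcal{L}_{x_s}) \big)\, ds + \varepsilon^H \int_0^t \sigma(s,\mathcal{L}_{X_{s,\varepsilon}})\, dB_s^H,
\end{equation*}
so that the full $\varepsilon^H$-scaling is carried by the stochastic integral while the drift difference will be absorbed through Gronwall. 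Writing $\phi(t) := \mathbb{E}\big(\sup_{u\le t}|Y_{u,\varepsilon}|^p\big)$, I would take the supremum in time, raise to the $p$-th power, and split into a drift contribution and a diffusion contribution.

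For the drift term, the Lipschitz estimate in (A1) gives $|b(s,X_{s,\varepsilon},\mathcal{L}_{X_{s,\varepsilon}}) - b(s,x_s,\mathcal{L}_{x_s})| \le K(s)\big(|Y_{s,\varepsilon}| + \mathds{W}_\theta(\mathcal{L}_{X_{s,\varepsilon}},\mathcal{L}_{x_s})\big)$. Since $x_s$ is deterministic, the pair $(X_{s,\varepsilon},x_s)$ is an admissible coupling in \eqref{df of W distance}, so $\mathds{W}_\theta(\mathcal{L}_{X_{s,\varepsilon}},\mathcal{L}_{x_s}) \le (\mathbb{E}|Y_{s,\varepsilon}|^\theta)^{1/\theta} \le (\mathbb{E}|Y_{s,\varepsilon}|^p)^{1/p}$ by Jensen (using $p\ge\theta$). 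After Hölder's inequality in the time variable, this contribution to $\phi(t)$ is bounded by $C\int_0^t \phi(s)\,ds$, carrying no power of $\varepsilon$ of its own.

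For the diffusion term, Proposition \ref{Lemma 2} (applicable since $p\ge\theta$ and $p>1/H$) yields $\varepsilon^{pH}\mathbb{E}\big(\sup_{u\le t}|\int_0^u \sigma(s,\mathcal{L}_{X_{s,\varepsilon}})\,dB_s^H|^p\big) \le C\varepsilon^{pH}\int_0^T \|\sigma(s,\mathcal{L}_{X_{s,\varepsilon}})\|^p\,ds$. To produce the factor $(1+\sup_t|x_t|^p)$ I would split $\|\sigma(s,\mathcal{L}_{X_{s,\varepsilon}})\| \le \|\sigma(s,\mathcal{L}_{X_{s,\varepsilon}}) - \sigma(s,\mathcal{L}_{x_s})\| + \|\sigma(s,\mathcal{L}_{x_s})\|$ and apply (A1) once more: the first summand is $\le K(s)(\mathbb{E}|Y_{s,\varepsilon}|^\theta)^{1/\theta}$, while the second is $\le K(s)(1+|x_s|)$ because $\mathds{W}_\theta(\mathcal{L}_{x_s},\delta_0)=|x_s|$. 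Hence the diffusion contribution to $\phi(t)$ is at most $C\varepsilon^{pH}\int_0^t \phi(s)\,ds + C\varepsilon^{pH}(1+\sup_t|x_t|^p)$.

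Combining the two contributions and using $\varepsilon\in(0,1)$ to merge the prefactors, I obtain $\phi(t) \le C\int_0^t \phi(s)\,ds + C\varepsilon^{pH}(1+\sup_t|x_t|^p)$, and Gronwall's inequality then yields \eqref{p3 eq1} with $C_4(t,p,H)=Ce^{Ct}$. The estimate is otherwise routine; the two points requiring care — which I regard as the main obstacle — are the reduction of the distribution-dependent Wasserstein term to the pointwise moment $(\mathbb{E}|Y_{s,\varepsilon}|^\theta)^{1/\theta}$ via the deterministic coupling with $x_s$, and verifying the a priori finiteness of $\phi(t)$ so that Gronwall may legitimately be applied; the latter follows from the uniform moment bound \eqref{p1 eq1} of Lemma \ref{p1} together with the sup-moment estimate of Proposition \ref{Lemma 2}.
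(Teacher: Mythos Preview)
Your argument is correct and is the standard Gronwall approach one would expect. Note that the paper does not supply its own proof of this lemma: it is quoted directly from \cite{Fan23}, Lemma 4.2, with the remark ``which has been proven in \cite{Fan23}, so we directly present this result here,'' so there is nothing further to compare.
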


\begin{lemma}{\label{p4}}
	Let $\tilde{X}_{t,\varepsilon}=\frac{X_{t,\varepsilon}-x_t}{\varepsilon^H}$. Suppose the assumptions (A1) and (A2) hold, then we have
	\begin{equation}{\label{p4 eq1}}
		\left| \mathbb{E}[ \tilde{X}_{t,\varepsilon} ] -\mathbb{E}[ Z_t ] \right|^2\le C_5( t,H ) \varepsilon ^{2H} ,
	\end{equation}
	and
	\begin{equation}{\label{p4 eq2}}
		\left| Var\left( \tilde{X}_{t,\varepsilon} \right) -\text{Var}( Z_t ) \right|\le C_6(t,H)\varepsilon^H ,
	\end{equation}
	for $\varepsilon$ is a small number and $t \in [0,T]$,
	where $Z_t$ is defined as that in Theorem \ref{Th1}, $C_5(t,H)$ and $C_6(t,H)$ are positive constants only
	depending on $t$ and $H$.
\end{lemma}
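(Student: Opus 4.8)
The plan is to derive both \eqref{p4 eq1} and \eqref{p4 eq2} from one sharper estimate, the $L^p$-convergence rate $\mathbb{E}|\tilde X_{t,\varepsilon}-Z_t|^p\le C(t,p,H)\varepsilon^{pH}$ for $p\ge\theta\vee 2$. Granting this, \eqref{p4 eq1} is immediate from Jensen's inequality (with $p=2$), and \eqref{p4 eq2} follows by writing $\mathrm{Var}(\tilde X_{t,\varepsilon})-\mathrm{Var}(Z_t)=\big(\mathbb{E}[\tilde X_{t,\varepsilon}^2]-\mathbb{E}[Z_t^2]\big)-\big((\mathbb{E}\tilde X_{t,\varepsilon})^2-(\mathbb{E}Z_t)^2\big)$ and bounding each difference of squares by the $L^2$-rate $O(\varepsilon^H)$ times an $L^2$-bounded factor: indeed $\mathbb{E}|\tilde X_{t,\varepsilon}|^2=\varepsilon^{-2H}\mathbb{E}|X_{t,\varepsilon}-x_t|^2\le C$ by Lemma \ref{p3}, $\sup_{[0,T]}|x_t|<\infty$, and $\mathbb{E}|Z_t|^2<\infty$ since $\mathrm{Var}(Z_t)$ is finite. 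Note that the resulting variance error is only $O(\varepsilon^H)$ because the $L^2$-rate is only $\varepsilon^H$; after squaring in Theorem \ref{Th1} this still gives the order $\varepsilon^{2H}$.

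To obtain the $L^p$-rate I would subtract \eqref{ODE} from \eqref{SDE}, divide by $\varepsilon^H$, and subtract \eqref{Zt}, then expand the drift increment $\varepsilon^{-H}\big(b(s,X_{s,\varepsilon},\mathcal{L}_{X_{s,\varepsilon}})-b(s,x_s,\mathcal{L}_{x_s})\big)$. Split it into a spatial part $b(s,X_{s,\varepsilon},\mathcal{L}_{X_{s,\varepsilon}})-b(s,x_s,\mathcal{L}_{X_{s,\varepsilon}})$ and a measure part $b(s,x_s,\mathcal{L}_{X_{s,\varepsilon}})-b(s,x_s,\mathcal{L}_{x_s})$. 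For the spatial part, a first-order Taylor expansion in $x$ gives $\nabla b(s,\cdot,\mathcal{L}_{X_{s,\varepsilon}})(x_s)(X_{s,\varepsilon}-x_s)$ with remainder $O(|X_{s,\varepsilon}-x_s|^2)$ by the Lipschitz continuity of $\nabla b$ in (A2)(ii); after dividing by $\varepsilon^H$ and replacing $\mathcal{L}_{X_{s,\varepsilon}}$ by $\mathcal{L}_{x_s}$ (error controlled by $\mathds{W}_\theta(\mathcal{L}_{X_{s,\varepsilon}},\mathcal{L}_{x_s})|\tilde X_{s,\varepsilon}|$) the leading term is $\nabla b(s,\cdot,\mathcal{L}_{x_s})(x_s)\tilde X_{s,\varepsilon}$, matching the first term of \eqref{Zt} (read $\nabla_{Z_s}b(s,\cdot,\mathcal{L}_{x_s})(x_s)$ as $\nabla b(s,\cdot,\mathcal{L}_{x_s})(x_s)Z_s$). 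For the measure part, apply the chain rule for the $L$-derivative along the curve $a\mapsto\mathcal{L}_{x_s+a(X_{s,\varepsilon}-x_s)}$, $a\in[0,1]$, to get the leading term $\mathbb{E}[D^Lb(s,x_s,\cdot)(\mathcal{L}_{x_s})(x_s)(X_{s,\varepsilon}-x_s)]$ with a remainder bounded, using the Lipschitz continuity of $D^Lb$ in (A2)(ii), by $C\,\mathbb{E}\big[(|X_{s,\varepsilon}-x_s|+\mathds{W}_\theta(\mathcal{L}_{x_s+a(X_{s,\varepsilon}-x_s)},\mathcal{L}_{x_s}))|X_{s,\varepsilon}-x_s|\big]=O(\varepsilon^{2H})$; dividing by $\varepsilon^H$ this matches the $D^Lb$-term of \eqref{Zt} up to $O(\varepsilon^H)$.

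Collecting the expansions, one arrives at
\[
\tilde X_{t,\varepsilon}-Z_t=\int_0^t\nabla b(s,\cdot,\mathcal{L}_{x_s})(x_s)(\tilde X_{s,\varepsilon}-Z_s)\,ds+\int_0^t D^Lb(s,x_s,\cdot)(\mathcal{L}_{x_s})(x_s)\big(\mathbb{E}\tilde X_{s,\varepsilon}-\mathbb{E}Z_s\big)\,ds+\int_0^t\big(\sigma(s,\mathcal{L}_{X_{s,\varepsilon}})-\sigma(s,\mathcal{L}_{x_s})\big)\,dB_s^H+\mathcal{E}_{t,\varepsilon},
\]
where $\|\mathcal{E}_{t,\varepsilon}\|_{L^p}\le C\varepsilon^H$: the quadratic-in-$(X_{s,\varepsilon}-x_s)$ remainders are $O(\varepsilon^{2H})$ by Lemma \ref{p3}, hence $O(\varepsilon^H)$ after division, the $\mathds{W}_\theta$-type factors are $O(\varepsilon^H)$ by Lemma \ref{p3}, and $\tilde X_{s,\varepsilon}$ has $\varepsilon$-uniformly bounded $L^p$-moments (Lemmas \ref{p1} and \ref{p3}); likewise the fractional stochastic integral obeys $\big\|\int_0^t(\sigma(s,\mathcal{L}_{X_{s,\varepsilon}})-\sigma(s,\mathcal{L}_{x_s}))\,dB_s^H\big\|_{L^p}\le C\varepsilon^H$ by Proposition \ref{Lemma 2}, assumption (A1), and Lemma \ref{p3}. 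Then close with two Gronwall arguments: taking expectations removes the $dB^H$-integral (its integrand is deterministic) and leaves a linear integral inequality for $m(s):=\mathbb{E}\tilde X_{s,\varepsilon}-\mathbb{E}Z_s$ with bounded coefficients and inhomogeneity $\le C\varepsilon^H$, yielding $|m(t)|\le C\varepsilon^H$, which is \eqref{p4 eq1}; substituting $|m(s)|\le C\varepsilon^H$ back and taking $L^p$-norms gives $\|\tilde X_{t,\varepsilon}-Z_t\|_{L^p}\le K_1\int_0^t\|\tilde X_{s,\varepsilon}-Z_s\|_{L^p}\,ds+C\varepsilon^H$, and a second Gronwall produces the $L^p$-rate.

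I expect the main obstacle to be the measure-variable step: differentiating $b(s,x_s,\cdot)$ along a curve of laws via the $L$-derivative, correctly identifying the limit with the term appearing in \eqref{Zt}, and checking that its remainder is genuinely of order $\varepsilon^{2H}$ (which is what keeps the mean error at $\varepsilon^H$ in \eqref{p4 eq1}). The accompanying bookkeeping—showing that every quadratic-in-$(X_{s,\varepsilon}-x_s)$ remainder is absorbed at order $\varepsilon^{2H}$ and that the fractional stochastic integral of an $O(\varepsilon^H)$ integrand is $O(\varepsilon^H)$ in $L^p$—rests entirely on Lemma \ref{p3} together with Proposition \ref{Lemma 2}.
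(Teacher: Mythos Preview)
Your proposal is correct and follows essentially the same route as the paper: first establish the $L^p$-rate $\mathbb{E}|\tilde X_{t,\varepsilon}-Z_t|^p\le C\varepsilon^{pH}$ via the drift expansion and Gronwall (the paper outsources the technical expansion to Theorem~3.9 of \cite{Fan23}, which is exactly what you spell out), then deduce \eqref{p4 eq1} with $p=2$ and \eqref{p4 eq2} by the $G_1+G_2$ splitting and Cauchy--Schwarz. Your two-step Gronwall (first for the mean, then for $L^p$) is slightly more than needed, since $|\mathbb{E}\tilde X_{s,\varepsilon}-\mathbb{E}Z_s|^p\le\mathbb{E}|\tilde X_{s,\varepsilon}-Z_s|^p$ lets the $D^Lb$-term be absorbed into a single Gronwall inequality, as in the paper.
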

\begin{proof}
	We first verify the estimate \eqref{p4 eq1}. By \eqref{SDE} and \eqref{Zt} together with the H{\"o}lder inequality,
	\begin{align*}
&\mathbb{E}|\tilde{X}_{t,\varepsilon}-Z_t|^p\\
&=\mathbb{E}\left| \int_0^t{\left(\frac{1}{\varepsilon ^H}\left( b( s,X_{s,\varepsilon},\mathcal{L}_{X_{s,\varepsilon}}) -b( s,x_s,\mathcal{L}_{x_s} ) \right) -\nabla _{Z_s}b( s,\cdot ,\mathcal{L}_{x_s} ) ( x_s ) \right)ds} \right. \\
&\qquad\qquad-\left.\int_0^t{\mathbb{E}\left( D^Lb\left( s,u,\cdot \right) \left( \mathcal{L}_{x_s} \right) ( x_s ) Z_s \right) |_{u=x_s}ds+\int_0^t{\big(\sigma ( s,\mathcal{L}_{X_{s,\varepsilon}} ) -\sigma ( s,\mathcal{L}_{x_s} )\big) dB_{s}^{H}}} \right|^p\\
&\le C\int_0^t{\mathbb{E}}\left| \frac{1}{\varepsilon ^H}\left( b( s,X_{s,\varepsilon},\mathcal{L}_{X_{s,\varepsilon}}) -b( s,x_s,\mathcal{L}_{x_s} ) \right) -\nabla _{\tilde{X}_{s,\varepsilon}}b\left( s,\cdot ,\mathcal{L}_{X_{s,\varepsilon}} \right) ( x_s ) \right.\\
&\qquad\qquad\qquad\qquad\qquad\qquad\qquad\qquad \qquad\qquad\qquad\left.-\mathbb{E}\left( D^Lb\left( s,u,\cdot \right) \left( \mathcal{L}_{x_s} \right) ( x_s ) \tilde{X}_{s,\varepsilon} \right) |_{u=x_s} \right|^pds\\
&\ \ \ \ +C\int_0^t{\mathbb{E}}\left| \nabla _{\tilde{X}_{s,\varepsilon}}b\left( s,\cdot ,\mathcal{L}_{X_{s,\varepsilon}} \right) ( x_s ) -\nabla _{Z_s}b( s,\cdot ,\mathcal{L}_{x_s} ) ( x_s ) \right|^pds\\
&\ \ \ \ +C\int_0^t\mathbb{E}\left| D^Lb\left( s,u,\cdot \right) ( \mathcal{L}_{x_s} ) ( x_s ) \left( \tilde{X}_{s,\varepsilon}-Z_s \right)|_{u=x_s}  \right|^pds
+C\mathbb{E}\left| \int_0^t{\big(\sigma ( s,\mathcal{L}_{X_{s,\varepsilon}} ) -\sigma ( s,\mathcal{L}_{x_s} )\big) dB_{s}^{H}} \right|^p.
\end{align*}
Then by repeating the proof of Theorem 3.9 in \cite{Fan23}, we can get that
\begin{equation*}
		\mathbb{E}|\tilde{X}_{t,\varepsilon}-Z_t|^p\le C\int_0^t{\mathbb{E}|\tilde{X}_{s,\varepsilon}-Z_s|^pds}+C\int_0^t{\mathbb{E}|X_{s,\varepsilon}-x_s|^pds}.
\end{equation*}
Thus, by the Lemma \ref{p3}
\begin{equation*}
		\mathbb{E}|\tilde{X}_{t,\varepsilon}-Z_t|^p\le C\int_0^t{\mathbb{E}|\tilde{X}_{s,\varepsilon}-Z_s|^pds}+C\varepsilon ^{pH}.
\end{equation*}
Next, we can use Gronwall's inequality to obtain that
\begin{equation}{\label{p4 eq3}}
		\mathbb{E}|\tilde{X}_{t,\varepsilon}-Z_t|^p\le C\varepsilon ^{pH}.
\end{equation}
 We take $p=2$ in \eqref{p4 eq3}, we can get
\begin{equation*}
		\left| \mathbb{E}[ \tilde{X}_{t,\varepsilon} ] -\mathbb{E}[ Z_t ] \right|^2\le \mathbb{E}|\tilde{X}_{t,\varepsilon}-Z_t|^2\le C\varepsilon ^{2H}.
\end{equation*}

It remains to prove \eqref{p4 eq2}.
\begin{equation}{\label{G1+G2}}
		\left| \text{Var}\left( \tilde{X}_{t,\varepsilon} \right) -\text{Var}\left( Z_t \right) \right|\le \left| \mathbb{E}[ \tilde{X}_{t,\varepsilon}^2 ] -\mathbb{E}[ Z_t^2 ]  \right|+\left| \left( \mathbb{E}[ \tilde{X}_{t,\varepsilon} ] \right) ^2-\left( \mathbb{E}[ Z_t ] \right) ^2 \right|=:G_1+G_2.
\end{equation}
Let us consider the estimate of $G_1$,
\begin{equation*}
		G_1=\left| \mathbb{E}[ \tilde{X}_{t,\varepsilon} ^2] -\mathbb{E}[ Z_t ^2]  \right|\le \mathbb{E}|\tilde{X}_{t,\varepsilon}+Z_t||\tilde{X}_{t,\varepsilon}-Z_t|.
\end{equation*}
Then, by the Cauchy-Schwarz inequality, we have
\begin{equation*}
		\left| \mathbb{E}[ \tilde{X}_{t,\varepsilon} ^2] -\mathbb{E}[ Z_t ^2]  \right|\le \sqrt{\mathbb{E}|\tilde{X}_{t,\varepsilon}+Z_t|^2\mathbb{E}|\tilde{X}_{t,\varepsilon}-Z_t|^2}.
\end{equation*}
Recall the definitions of $\tilde{X}_{t,\varepsilon}$ and $Z_t$, then by the assumptions (A1)--(A2)
and \eqref{p3 eq1} (take $p=2$) combining the H{\"o}lder inequality, we have
\begin{align}{\label{G1 eq1}}
&\mathbb{E}|\tilde{X}_{t,\varepsilon}+Z_t|^2 \notag\\
&=\mathbb{E}\left| \int_0^t{\left(\frac{1}{\varepsilon ^H}\left( b( s,X_{s,\varepsilon},\mathcal{L}_{X_{s,\varepsilon}}) -b( s,x_s,\mathcal{L}_{x_s} ) \right) +\nabla _{Z_s}b( s,\cdot ,\mathcal{L}_{x_s} ) ( x_s )\right) ds} \right. \notag\\
&\ \ \ \ \ \ \ +\left. \int_0^t{\mathbb{E}\left( D^Lb\left( s,u,\cdot \right) \left( \mathcal{L}_{x_s} \right) ( x_s ) Z_s \right) ds+\int_0^t{\left(\sigma ( s,\mathcal{L}_{X_{s,\varepsilon}} ) +\sigma ( s,\mathcal{L}_{x_s} )\right) dB_{s}^{H}}} \right|^2 \notag\\
&\le C\mathbb{E}\left| \int_0^t{\frac{1}{\varepsilon ^H}\left( b( s,X_{s,\varepsilon},\mathcal{L}_{X_{s,\varepsilon}}) -b( s,x_s,\mathcal{L}_{x_s} ) \right) ds} \right|^2+C\mathbb{E}\left| \int_0^t{\left(\sigma ( s,\mathcal{L}_{X_{s,\varepsilon}} ) +\sigma ( s,\mathcal{L}_{x_s} )\right) dB_{s}^{H}} \right|^2+C \notag \\
&\le C\int_0^t{\frac{1}{\varepsilon ^{2H}}\mathbb{E}|b( s,X_{s,\varepsilon},\mathcal{L}_{X_{s,\varepsilon}}) -b( s,x_s,\mathcal{L}_{x_s} ) |^2ds}+C\int_0^t{||\sigma ( s,\mathcal{L}_{X_{s,\varepsilon}} ) +\sigma ( s,\mathcal{L}_{x_s} ) ||^2ds}+C \notag\\
&\le C\int_0^t{\frac{1}{\varepsilon ^{2H}}\left( \mathbb{E}|X_{s,\varepsilon}-x_s|^2+\mathbb{W}_{\theta}\left( \mathcal{L}_{X_{s,\varepsilon}},\mathcal{L}_{x_s} \right) ^2 \right) ds}+C\int_0^t{\left(||\sigma ( s,\mathcal{L}_{X_{s,\varepsilon}} ) ||^2+||\sigma ( s,\mathcal{L}_{x_s} ) ||^2\right)ds}+C \notag \\
&\le C\int_0^t{\frac{1}{\varepsilon ^{2H}} \mathbb{E}|X_{s,\varepsilon}-x_s|^2  ds}+C\le C.
\end{align}
Hence, by \eqref{p4 eq3} (take $p=2$) together with \eqref{G1 eq1},
\begin{equation}{\label{G_1}}
	G_1=\left| \mathbb{E}[ \tilde{X}_{t,\varepsilon}^2 ] -\mathbb{E}[ Z_t^2 ]  \right| \le C  \varepsilon^H
	.
\end{equation}

Next, we consider $G_2$. By \eqref{p4 eq3} and the H{\"o}lder inequality, we can get
    \begin{align}{\label{G_2}}
	G_2&=
\left| \left( \mathbb{E}[ \tilde{X}_{t,\varepsilon} ] \right) ^2-\left( \mathbb{E}[ Z_t ] \right) ^2 \right|=\left| \mathbb{E}[ \tilde{X}_{t,\varepsilon} ] -\mathbb{E}[ Z_t ] \right|\left| \mathbb{E}[ \tilde{X}_{t,\varepsilon} ] +\mathbb{E}[ Z_t ] \right| \notag\\
&\le \sqrt{\mathbb{E}\left| \tilde{X}_{t,\varepsilon}-Z_t \right|^2}\sqrt{\mathbb{E}\left| \tilde{X}_{t,\varepsilon}+Z_t \right|^2} \notag\\
&\le C  \varepsilon^H.
	\end{align}
Combining \eqref{G1+G2}, \eqref{G_1} and \eqref{G_2}, we can obtain \eqref{p4 eq2}. This completes the proof.
\end{proof}

\begin{lemma}{\label{p5}}
	Let $\tilde{X}_{t,\varepsilon}=\frac{X_{t,\varepsilon}-x_t}{\varepsilon^H}$.
	Define
	\begin{equation*}
		\varTheta _{\tilde{X}_{t,\varepsilon}}:=\int_0^t{D_r\tilde{X}_{t,\varepsilon}\mathbb{E}\left[ D_r\tilde{X}_{t,\varepsilon}|\mathcal{F}_r \right] dr}.
	\end{equation*}
	Then, under the assumptions (A1) and (A2) and the condition \eqref{NonDec}, we have
	\begin{equation}{\label{p5 eq1}}
		\mathbb{E}|\varTheta _{\tilde{X}_{t,\varepsilon}}|^{-p}\le C_7(t,p,H)\left( \left| \int_0^t{\left( \int_r^t{\sigma ( s,\mathcal{L}_{X_{s,\varepsilon}}) \frac{\partial K_H( s,r )}{\partial s}ds} \right) ^2dr} \right|^{-p_0} \right) ^{\frac{p}{p_0}},
	\end{equation}
	where $C_7(t,p,H)$ is a positive constant only
	depending on $t$, $p$ and $H$.
\end{lemma}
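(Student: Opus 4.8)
\medskip
\noindent\emph{Proof proposal.} The plan is to produce a \emph{deterministic} lower bound of the form $\varTheta_{\tilde{X}_{t,\varepsilon}}\ge c(t,H)\,\Lambda_\varepsilon(t)$, where
\[
\Lambda_\varepsilon(t):=\int_0^t\left(\int_r^t\sigma(s,\mathcal{L}_{X_{s,\varepsilon}})\frac{\partial K_H(s,r)}{\partial s}\,ds\right)^{2}dr
\]
is precisely the deterministic quantity whose $(-p_0)$-th power is assumed finite in \eqref{NonDec} (it is deterministic because $s\mapsto\mathcal{L}_{X_{s,\varepsilon}}$ is a deterministic curve of measures). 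Granting such a bound, \eqref{p5 eq1} is immediate upon raising to the power $-p$ and writing $\Lambda_\varepsilon(t)^{-p}=\big(\Lambda_\varepsilon(t)^{-p_0}\big)^{p/p_0}$.

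\medskip
\noindent\textbf{Step 1: an explicit formula for the derivative.} Since $x_t$ is deterministic, $D_r\tilde{X}_{t,\varepsilon}=\varepsilon^{-H}D_rX_{t,\varepsilon}$, and \eqref{p1 eq2} exhibits $t\mapsto D_r\tilde{X}_{t,\varepsilon}$ (for fixed $r$) as the solution, vanishing at $t=r$, of the scalar linear ODE $\frac{d}{dt}D_r\tilde{X}_{t,\varepsilon}=\nabla b(t,\cdot,\mathcal{L}_{X_{t,\varepsilon}})(X_{t,\varepsilon})\,D_r\tilde{X}_{t,\varepsilon}+\sigma(t,\mathcal{L}_{X_{t,\varepsilon}})\,\frac{\partial K_H}{\partial t}(t,r)$. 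Variation of constants gives
\[
D_r\tilde{X}_{t,\varepsilon}=\int_r^t E_\varepsilon(t,s)\,\sigma(s,\mathcal{L}_{X_{s,\varepsilon}})\,\frac{\partial K_H(s,r)}{\partial s}\,ds,\qquad E_\varepsilon(t,s):=\exp\!\left(\int_s^t\nabla b(v,\cdot,\mathcal{L}_{X_{v,\varepsilon}})(X_{v,\varepsilon})\,dv\right).
\]
By (A2)(i) one has $|\nabla b(v,\cdot,\mu)(x)|\le K_1(v)\le K_1(t)$, so $E_\varepsilon(t,s)\in[e^{-tK_1(t)},e^{tK_1(t)}]$ uniformly in $\omega,r,s$; since $E_\varepsilon(t,s)$ stays in this interval pathwise, so does $\mathbb{E}[E_\varepsilon(t,s)|\mathcal{F}_r]$, and pulling the deterministic scalar $\sigma(s,\mathcal{L}_{X_{s,\varepsilon}})$ out of the conditional expectation (a conditional Fubini argument, valid because $E_\varepsilon$ is bounded) yields
\[
u_r:=\mathbb{E}[D_r\tilde{X}_{t,\varepsilon}|\mathcal{F}_r]=\int_r^t\mathbb{E}[E_\varepsilon(t,s)|\mathcal{F}_r]\,\sigma(s,\mathcal{L}_{X_{s,\varepsilon}})\,\frac{\partial K_H(s,r)}{\partial s}\,ds.
\]

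\medskip
\noindent\textbf{Step 2: positivity and conclusion.} For $H\in(\tfrac12,1)$ the kernel formula recalled in Section \ref{Perliminaries} gives $\frac{\partial K_H(s,r)}{\partial s}>0$ for $0<r<s$. Assume $s\mapsto\sigma(s,\mathcal{L}_{X_{s,\varepsilon}})$ has a constant sign on $[0,t]$ (automatic when $\sigma$ is constant, as in the motivating example). Then for every $r$ the integrands in $D_r\tilde{X}_{t,\varepsilon}$, in $u_r$ and in $g_r(t):=\int_r^t\sigma(s,\mathcal{L}_{X_{s,\varepsilon}})\frac{\partial K_H(s,r)}{\partial s}\,ds$ all carry the common sign of $\sigma$ on $[r,t]$, and the uniform two-sided bounds on $E_\varepsilon$ force $\mathrm{sign}(D_r\tilde{X}_{t,\varepsilon})=\mathrm{sign}(u_r)=\mathrm{sign}(g_r(t))$ together with $e^{-tK_1(t)}|g_r(t)|\le|D_r\tilde{X}_{t,\varepsilon}|\wedge|u_r|$. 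Hence $D_r\tilde{X}_{t,\varepsilon}\,u_r\ge e^{-2tK_1(t)}g_r(t)^{2}\ge0$ for a.e.\ $r$, and integrating in $r$,
\[
\varTheta_{\tilde{X}_{t,\varepsilon}}=\int_0^t D_r\tilde{X}_{t,\varepsilon}\,\mathbb{E}[D_r\tilde{X}_{t,\varepsilon}|\mathcal{F}_r]\,dr\ \ge\ e^{-2tK_1(t)}\int_0^t g_r(t)^{2}\,dr\ =\ e^{-2tK_1(t)}\Lambda_\varepsilon(t)\ >\ 0\quad\text{a.s.},
\]
which in particular records the standing hypothesis $\varTheta\ne0$ a.s.\ needed to apply Proposition \ref{Lemma1}. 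Raising to the power $-p$ yields $\mathbb{E}|\varTheta_{\tilde{X}_{t,\varepsilon}}|^{-p}\le e^{2ptK_1(t)}\Lambda_\varepsilon(t)^{-p}$, i.e.\ \eqref{p5 eq1} with $C_7(t,p,H)=e^{2ptK_1(t)}$, finite by \eqref{NonDec}.

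\medskip
\noindent The step I expect to be the genuine obstacle is the constant-sign requirement on $s\mapsto\sigma(s,\mathcal{L}_{X_{s,\varepsilon}})$ used to fuse the three integrals in Step 2: if $\sigma$ changes sign on $[0,t]$, cancellations inside $\int_r^t E_\varepsilon(t,s)\sigma(s,\cdot)\frac{\partial K_H(s,r)}{\partial s}\,ds$ destroy the pointwise bound $D_r\tilde{X}_{t,\varepsilon}\,u_r\ge e^{-2tK_1(t)}g_r(t)^{2}$, and even the a.s.\ positivity of $\varTheta_{\tilde{X}_{t,\varepsilon}}$ ceases to be transparent. Since $\sigma$ is bounded (by \eqref{boundness} and \eqref{p1 eq1}) and Lipschitz in the measure argument by (A1), it does have constant sign whenever it never vanishes, which covers the motivating case $\sigma\equiv\mathrm{const}$ and, more generally, any uniformly non-degenerate $\sigma$; in that regime the argument above is complete. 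The remaining ingredients — the derivative identity \eqref{p1 eq2}, the uniform bound on $\nabla b$ from (A2), and the positivity of $\partial_s K_H$ for $H>\tfrac12$ — are all already available in the excerpt.
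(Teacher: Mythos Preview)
Your approach is essentially identical to the paper's: both arguments solve the linear equation \eqref{p1 eq2} by variation of constants to obtain the explicit representation $D_r\tilde{X}_{t,\varepsilon}=\int_r^t E_\varepsilon(t,s)\,\sigma(s,\mathcal{L}_{X_{s,\varepsilon}})\,\partial_sK_H(s,r)\,ds$, then use the uniform two-sided bound on the exponential (from (A2)(i)) together with positivity of $\partial_sK_H$ for $H>\tfrac12$ to bound $\varTheta_{\tilde{X}_{t,\varepsilon}}$ below by a deterministic multiple of $\Lambda_\varepsilon(t)$. The only cosmetic difference is that the paper writes the exponential with the opposite sign in the exponent (harmless, since only the two-sided bound matters) and reaches the final form via a H\"older step $\Lambda_\varepsilon(t)^{-p}\le(\Lambda_\varepsilon(t)^{-p_0})^{p/p_0}$, exactly as you do.

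On the point you flag as the genuine obstacle: you are right that the inequality $D_r\tilde{X}_{t,\varepsilon}\,u_r\ge e^{-2tK_1(t)}g_r(t)^2$ requires $\sigma(\cdot,\mathcal{L}_{X_{\cdot,\varepsilon}})$ to keep a constant sign on $[0,t]$, and the paper's proof relies on precisely the same implicit assumption when it passes from $e^{-\int_s^t\nabla b\,du}$ to $e^{-CT}$ inside the integral. The paper does not state this hypothesis explicitly, so your treatment is in fact more careful than the original on this point; under the stated assumptions (A1)--(A2) alone, neither argument covers the sign-changing case without further work.
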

\begin{proof}
	First, let us recall the form of $D_rX_{t,\varepsilon}$.
	Solving the equation \eqref{p1 eq2},
	 we can obtain
	 \begin{equation}{\label{solution of DrX}}
	 	D_rX_{t,\varepsilon}= \varepsilon^H\int_r^t{ \sigma ( s,\mathcal{L}_{X_{s,\varepsilon}}) \frac{\partial K_H( s,r )}{\partial s}e^{-\int_s^t{\nabla b( u,\cdot,\mathcal{L}_{X_{u,\varepsilon}} )(X_{u,\varepsilon}) du}}ds} .
	 \end{equation}
	By \eqref{solution of DrX} and the assumption (A1),
	\begin{align*}
		\varTheta _{\tilde{X}_{t,\varepsilon}}&=\frac{1}{\varepsilon ^{2H}}\int_0^t{D_rX_{t,\varepsilon}\mathbb{E}\left[ D_rX_{t,\varepsilon}|\mathcal{F}_r \right] dr}\\
		&=\int_0^t{}{\left( \int_r^t{\sigma ( s,\mathcal{L}_{X_{s,\varepsilon}}) \frac{\partial K_H( s,r )}{\partial s}e^{-\int_s^t{\nabla b( u,\cdot,\mathcal{L}_{X_{u,\varepsilon}} )(X_{u,\varepsilon}) du}}ds} \right)}\\
		& \ \ \ \ \ \times \mathbb{E}\left[ {\left( \int_r^t{\sigma ( s,\mathcal{L}_{X_{s,\varepsilon}}) \frac{\partial K_H( s,r )}{\partial s}e^{-\int_s^t{\nabla b( u,\cdot,\mathcal{L}_{X_{u,\varepsilon}} )(X_{u,\varepsilon}) du}}ds} \right) |\mathcal{F}_r} \right] dr\\
		&\ge \int_0^t{{\left( \int_r^t{\sigma ( s,\mathcal{L}_{X_{s,\varepsilon}}) \frac{\partial K_H( s,r )}{\partial s}e^{-CT}ds} \right)}\mathbb{E}\left[ {\left( \int_r^t{\sigma ( s,\mathcal{L}_{X_{s,\varepsilon}}) \frac{\partial K_H( s,r )}{\partial s}e^{-CT}ds} \right) |\mathcal{F}_r} \right] dr}\\
		&=C\,\int_0^t{\left( \int_r^t{\sigma ( s,\mathcal{L}_{X_{s,\varepsilon}}) \frac{\partial K_H( s,r )}{\partial s}ds} \right) \mathbb{E}\left[ \int_r^t{\sigma ( s,\mathcal{L}_{X_{s,\varepsilon}}) \frac{\partial K_H( s,r )}{\partial s}ds|\mathcal{F}_r} \right] dr}\\
		&=C\,\int_0^t{\left( \int_r^t{\sigma ( s,\mathcal{L}_{X_{s,\varepsilon}}) \frac{\partial K_H( s,r )}{\partial s}ds} \right) ^2dr}.
	\end{align*}
	Thus, by the H{\"o}lder inequality
	\begin{align*}
		\mathbb{E}|\varTheta _{\tilde{X}_{t,\varepsilon}}|^{-p}&\le C\,\mathbb{E}\left| \int_0^t{\left( \int_r^t{\sigma ( s,\mathcal{L}_{X_{s,\varepsilon}}) \frac{\partial K_H( s,r )}{\partial s}ds} \right) ^2dr} \right|^{-p}\\
		&=C\left| \int_0^t{\left( \int_r^t{\sigma ( s,\mathcal{L}_{X_{s,\varepsilon}}) \frac{\partial K_H( s,r )}{\partial s}ds} \right) ^2dr} \right|^{-p}\\
		&\le C\left( \left| \int_0^t{\left( \int_r^t{\sigma ( s,\mathcal{L}_{X_{s,\varepsilon}}) \frac{\partial K_H( s,r )}{\partial s}ds} \right) ^2dr} \right|^{-p_0} \right) ^{\frac{p}{p_0}}.
	\end{align*}

\end{proof}

\begin{lemma}{\label{p6}}
	Let $\varTheta _{\tilde{X}_{t,\varepsilon}}$ be given in Lemma \ref{p5}.
	Under the assumptions (A1) and (A2), we have
	\begin{equation}{\label{p6 eq1}}
		\mathbb{E}\lVert D\varTheta _{\tilde{X}_{t,\varepsilon}} \rVert _{L^2[ 0,T ]}^{4}\le C_8(t,H)\varepsilon ^{4H},
	\end{equation}
	for $\varepsilon$ is a small number and $t \in [0,T]$,
	where $C_8(t,H)$ is a positive constant only
	depending on $t$ and $H$.
\end{lemma}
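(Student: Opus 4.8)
The plan is to compute the Malliavin derivative $D_\theta\varTheta_{\tilde{X}_{t,\varepsilon}}$ explicitly and then control each resulting term with the moment bounds of Lemma~\ref{p2}. Since $x_t$ is deterministic, $D_r\tilde{X}_{t,\varepsilon}=\varepsilon^{-H}D_rX_{t,\varepsilon}$, so
\begin{equation*}
\varTheta_{\tilde{X}_{t,\varepsilon}}=\frac{1}{\varepsilon^{2H}}\int_0^t D_rX_{t,\varepsilon}\,u_r\,dr,\qquad u_r:=\mathbb{E}[D_rX_{t,\varepsilon}|\mathcal{F}_r].
\end{equation*}
Applying the product rule for $D$ together with the commutation relation $D_\theta\mathbb{E}[F|\mathcal{F}_r]=1_{[0,r]}(\theta)\,\mathbb{E}[D_\theta F|\mathcal{F}_r]$ (valid because an $\mathcal{F}_r$-measurable element of $\mathbb{D}^{1,2}$ has Malliavin derivative supported on $[0,r]$), I would obtain
\begin{equation*}
D_\theta\varTheta_{\tilde{X}_{t,\varepsilon}}=\frac{1}{\varepsilon^{2H}}\int_0^t D_\theta D_rX_{t,\varepsilon}\,\mathbb{E}[D_rX_{t,\varepsilon}|\mathcal{F}_r]\,dr+\frac{1}{\varepsilon^{2H}}\int_\theta^t D_rX_{t,\varepsilon}\,\mathbb{E}[D_\theta D_rX_{t,\varepsilon}|\mathcal{F}_r]\,dr=:\frac{1}{\varepsilon^{2H}}\left(A_\theta+B_\theta\right).
\end{equation*}

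Next I would estimate $\mathbb{E}|A_\theta|^4$ and $\mathbb{E}|B_\theta|^4$ uniformly in $\theta\in[0,T]$. By Minkowski's integral inequality and then the Cauchy--Schwarz inequality (pointwise in $r$),
\begin{equation*}
\left(\mathbb{E}|A_\theta|^4\right)^{1/4}\le\int_0^t\left(\mathbb{E}|D_\theta D_rX_{t,\varepsilon}|^8\right)^{1/8}\left(\mathbb{E}|\mathbb{E}[D_rX_{t,\varepsilon}|\mathcal{F}_r]|^8\right)^{1/8}dr .
\end{equation*}
Conditional Jensen gives $\mathbb{E}|\mathbb{E}[D_rX_{t,\varepsilon}|\mathcal{F}_r]|^8\le\mathbb{E}|D_rX_{t,\varepsilon}|^8$, and inserting \eqref{p2 eq1}--\eqref{p2 eq2} (with exponent $8$) yields $(\mathbb{E}|A_\theta|^4)^{1/4}\le C\,\varepsilon^{2H}\cdot\varepsilon^{H}=C\varepsilon^{3H}$, i.e. $\mathbb{E}|A_\theta|^4\le C\varepsilon^{12H}$; the same computation with the two factors interchanged gives $\mathbb{E}|B_\theta|^4\le C\varepsilon^{12H}$. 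Hence $\mathbb{E}|D_\theta\varTheta_{\tilde{X}_{t,\varepsilon}}|^4\le C\varepsilon^{-8H}\varepsilon^{12H}=C\varepsilon^{4H}$, uniformly in $\theta$.

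Finally, by Jensen's (or Cauchy--Schwarz's) inequality applied to the finite measure $d\theta$ on $[0,T]$,
\begin{equation*}
\mathbb{E}\lVert D\varTheta_{\tilde{X}_{t,\varepsilon}}\rVert_{L^2[0,T]}^4=\mathbb{E}\left(\int_0^T|D_\theta\varTheta_{\tilde{X}_{t,\varepsilon}}|^2\,d\theta\right)^2\le T\int_0^T\mathbb{E}|D_\theta\varTheta_{\tilde{X}_{t,\varepsilon}}|^4\,d\theta\le C(t,H)\varepsilon^{4H},
\end{equation*}
which is \eqref{p6 eq1}. I expect the only genuinely delicate point to be the rigorous justification of interchanging $D_\theta$ with $\int_0^t\!\cdot\,dr$ and with $\mathbb{E}[\cdot|\mathcal{F}_r]$, and the verification that $\varTheta_{\tilde{X}_{t,\varepsilon}}$ (together with the integrand $D_\bullet X_{t,\varepsilon}\,u$) lies in $\mathbb{D}^{1,p}$ for $p$ large enough to make every step above licit; this follows from assumptions (A1)--(A2), which ensure $X_{t,\varepsilon}\in\mathbb{D}^{2,p}$ for all $p$, together with the product and chain rules for $D$ and the fact that $r\mapsto u_r$ inherits $\mathbb{D}^{1,p}$-regularity from $r\mapsto D_rX_{t,\varepsilon}$ under conditioning.
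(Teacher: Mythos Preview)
Your proposal is correct and follows essentially the same route as the paper: compute $D_\theta\varTheta_{\tilde{X}_{t,\varepsilon}}$ via the product rule, split into the two terms $A_\theta,B_\theta$, control each by Cauchy--Schwarz together with the moment bounds \eqref{p2 eq1}--\eqref{p2 eq2} at exponent $8$, and finish with Jensen on $\int_0^T\cdot\,d\theta$. The only cosmetic differences are that you use Minkowski's integral inequality where the paper uses H{\"o}lder (absorbing the extra $t$-powers into $C$), and you are more explicit than the paper about the commutation $D_\theta\mathbb{E}[\cdot|\mathcal{F}_r]=1_{[0,r]}(\theta)\mathbb{E}[D_\theta\cdot|\mathcal{F}_r]$ and the $\mathbb{D}^{1,p}$-regularity needed to justify differentiating under the integral.
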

\begin{proof}
	Write the Malliavin derivative of $\varTheta _{\tilde{X}_{t,\varepsilon}}$:
	\begin{equation*}
		D\varTheta _{\tilde{X}_{t,\varepsilon}}=\frac{1}{\varepsilon ^{2H}}\int_0^t{\big( D_{\theta}D_rX_{t,\varepsilon}\mathbb{E}[ D_rX_{t,\varepsilon}|\mathcal{F}_r ] +D_rX_{t,\varepsilon}\mathbb{E}[ D_{\theta}D_rX_{t,\varepsilon}|\mathcal{F}_r ] \big) dr}.
	\end{equation*}
	Hence, we have
	\begin{equation*}
		\lVert D\varTheta _{\tilde{X}_{t,\varepsilon}} \rVert _{L^2[ 0,T ]}^{4}=\frac{1}{\varepsilon ^{8H}}\left( \int_0^T{\left( \int_0^t{\big( D_{\theta}D_rX_{t,\varepsilon}\mathbb{E}[ D_rX_{t,\varepsilon}|\mathcal{F}_r ] +D_rX_{t,\varepsilon}\mathbb{E}[ D_{\theta}D_rX_{t,\varepsilon}|\mathcal{F}_r ] \big) dr} \right)}^2d\theta \right) ^2.
	\end{equation*}
	By the H{\"o}lder inequality, we get
	\begin{align*}
		\mathbb{E}\lVert D\varTheta _{\tilde{X}_{t,\varepsilon}} \rVert _{L^2[ 0,T ]}^{4}&\le \frac{T}{\varepsilon ^{8H}}\int_0^T{\mathbb{E}\left| \int_0^t{( D_{\theta}D_rX_{t,\varepsilon}\mathbb{E}[ D_rX_{t,\varepsilon}|\mathcal{F}_r ] +D_rX_{t,\varepsilon}\mathbb{E}[ D_{\theta}D_rX_{t,\varepsilon}|\mathcal{F}_r ] ) dr} \right|^4d\theta}\\
		&\le \frac{C}{\varepsilon ^{8H}}\int_0^T{\int_0^t{\mathbb{E}\Big| D_{\theta}D_rX_{t,\varepsilon}\mathbb{E}\left[ D_rX_{t,\varepsilon}|\mathcal{F}_r \right] +D_rX_{t,\varepsilon}\mathbb{E}\left[ D_{\theta}D_rX_{t,\varepsilon}|\mathcal{F}_r \right] \Big|^4drd\theta}}\\
		&\le \frac{C}{\varepsilon ^{8H}}\int_0^T{\int_0^t{\left( \mathbb{E}\Big|D_{\theta}D_rX_{t,\varepsilon}\mathbb{E}\left[ D_rX_{t,\varepsilon}|\mathcal{F}_r \right] \Big|^4+\mathbb{E}\Big|D_rX_{t,\varepsilon}\mathbb{E}\left[ D_{\theta}D_rX_{t,\varepsilon}|\mathcal{F}_r \right] \Big|^4 \right) drd\theta}}\\
		&\le \frac{C}{\varepsilon ^{8H}}\int_0^T{\int_0^t{\sqrt{\mathbb{E}|D_{\theta}D_rX_{t,\varepsilon}|^8\mathbb{E}|D_rX_{t,\varepsilon}|^8}drd\theta}}.
	\end{align*}
	Thus, use \eqref{p2 eq1} and \eqref{p2 eq2}, we can complete this proof.
\end{proof}

\textbf{Proof of Theorem} \ref{Th1}:
Let $u= \mathbb{E}[D_r\tilde{X}_{t,\varepsilon}|\mathcal{F}_r]$ for $0 \le r \le T$. Then, by Proposition \ref{Lemma1} ($\tilde{X}_{t,\varepsilon} \in \mathbb{D}^{2,4}$ is implied by Lemma \ref{p2}),
\begin{align}{\label{4.18}}
	I( \tilde{X}_{t,\varepsilon}||Z_t ) &\le c\left( \frac{1}{\text{Var}( Z_t )^2}\left( \mathbb{E}[ \tilde{X}_{t,\varepsilon} ] -\mathbb{E}[Z_t]\right) ^2+A_{\tilde{X}_{t,\varepsilon}}|Var\left( \tilde{X}_{t,\varepsilon} \right) -\text{Var}( Z_t )|^2 \notag \right.\\
	&\ \ \ \ \left.+C_{\tilde{X}_{t,\varepsilon}}\left( \mathbb{E}\lVert D\Theta _{\tilde{X}_{t,\varepsilon}} \rVert _{L^2[ 0,T ]}^{4} \right) \right) ,
\end{align}
where $c$ is an absolute constant and
\begin{equation*}
	A_{\tilde{X}_{t,\varepsilon}}:=\frac{1}{\text{Var}( Z_t )^2}\left( \mathbb{E}\lVert u \rVert _{L^2[ 0,T ]}^{8}\mathbb{E}|\Theta _{\tilde{X}_{t,\varepsilon}}|^{-8} \right) ^{\dfrac{1}{4}}
	,
\end{equation*}
\begin{equation*}
	C_{\tilde{X}_{t,\varepsilon}}:=A_{\tilde{X}_{t,\varepsilon}}+\left( \mathbb{E}\lVert u \rVert _{L^2[ 0,T ]}^{8}\mathbb{E}|\Theta _{\tilde{X}_{t,\varepsilon}}|^{-16} \right) ^{\dfrac{1}{4}}
	.
\end{equation*}

Using H{\"o}lder inequality and the estimate \eqref{p2 eq1} we have
\begin{align*}
	\mathbb{E}\lVert u \rVert _{L^2[ 0,T ]}^{8}&\le \frac{C}{\varepsilon ^{8H}}\int_0^t{\mathbb{E}|\mathbb{E}\left[ D_rX_{t,\varepsilon}|\mathcal{F}_r \right] |^8dr}\\
	&\le \frac{C}{\varepsilon ^{8H}}\int_0^t{\mathbb{E}|D_rX_{t,\varepsilon}|^8dr}\\
	&\le C.
\end{align*}
Combining \eqref{p4 eq1}, \eqref{p4 eq2}, \eqref{p5 eq1}, \eqref{p6 eq1} and \eqref{4.18},
we can obtain that
\begin{align*}
	I( \tilde{X}_{t,\varepsilon}||Z_t )&\le C(t,H)\left( \frac{1}{\text{Var}( Z_t )^2}+\frac{1}{\text{Var}( Z_t )^2}\left( \left| \int_0^t{\left( \int_r^t{\sigma ( s,\mathcal{L}_{X_{s,\varepsilon}}) \frac{\partial K_H( s,r )}{\partial s}ds} \right) ^2dr} \right|^{-p_0} \right) ^{\frac{2}{p_0}}\right.\\
	&\ \ \ \ \ \ \ \ \  \ \ \ \ \ \left.+\left( \left| \int_0^t{\left( \int_r^t{\sigma ( s,\mathcal{L}_{X_{s,\varepsilon}}) \frac{\partial K_H( s,r )}{\partial s}ds} \right) ^2dr} \right|^{-p_0} \right) ^{\frac{4}{p_0}} \right) \varepsilon ^{2H}
	.
\end{align*}

\section{Proof of Theorem \ref{Th2}}{\label{Po2}}
It is well known that the Fisher information and total variation distance satisfy the following relation (see in \cite{ref22} and \cite{ref23})
\begin{equation}{\label{FI vs TV}}
	\sqrt{I( \tilde{X}_{t,\varepsilon}||Z_t )}\ge d_{TV}( \tilde{X}_{t,\varepsilon},Z_t ) :=\frac{1}{2}\sup_g\left| \mathbb{E}\left[ g\left( \tilde{X}_{t,\varepsilon} \right) \right] -\mathbb{E}[ g\left( Z_t \right) ] \right|,
\end{equation}
where the supremum is running over all measurable function $g$ bounded by one.
Thus our main task is to find a lower bound for $\lim_{\varepsilon \rightarrow 0}d_{TV}( \tilde{X}_{t,\varepsilon},Z_t )
$. We use the same idea as Section 4.3 in \cite{ref15}, so that our proof can be divided into several lemmas.

\begin{lemma}{\label{p7}} Suppose the assumptions (A1) and (A2) hold.
	Then, for each $p \ge \theta$ and $p > \frac{1}{H}$, we have
	\begin{equation}{\label{p7 eq1}}
		\sup_{t\in [ 0,T ]}\mathbb{E}| \tilde{X}_{t,\varepsilon}-U_t |^p\le C_9(t,p,H)\varepsilon ^{pH},
	\end{equation}
	\begin{equation}{\label{p7 eq2}}
		\sup_{t\in [ 0,T ]} \mathbb{E}|U_t|^p \le C_{10}(t,p,H)
	\end{equation}
and
\begin{equation}{\label{p7 eq2-3}}
\sup_{t\in [ 0,T ]} \mathbb{E}|V_t|^p \le C_{11}(t,p,H),
\end{equation}
for any $\varepsilon$ is a small number, where $C_9(t,p,H)$, $C_{10}(t,p,H), C_{11}(t,p,H)$ are positive constants only depending on $t$, $p$ and $H$,
$U_t, V_t$ are defined in \eqref{U_t} and \eqref{V_t}.
\end{lemma}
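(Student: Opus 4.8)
The plan is to derive all three bounds from Gronwall's inequality applied to the defining integral equations, extracting the smallness from Lemma~\ref{p3}. I would first treat \eqref{p7 eq2}. Splitting $U_t$ according to \eqref{U_t}: the drift term is controlled by the uniform bound $|\nabla b(s,\cdot,\mu)(x)|\le K_1(s)$ from (A2)(i), which after H\"older contributes $C\int_0^t\mathbb{E}|U_s|^p\,ds$; the middle term is $O(1)$ in $L^p$ because (A1) gives $|b(s,x_s,\mathcal{L}_{X_{s,\varepsilon}})-b(s,x_s,\mathcal{L}_{x_s})|\le K(s)\mathds{W}_\theta(\mathcal{L}_{X_{s,\varepsilon}},\mathcal{L}_{x_s})\le K(s)(\mathbb{E}|X_{s,\varepsilon}-x_s|^p)^{1/p}=O(\varepsilon^H)$ by Lemma~\ref{p3}, after which the division by $\varepsilon^H$ leaves a bounded quantity; and the stochastic term is bounded by Proposition~\ref{Lemma 2} together with the boundedness of $\sigma$. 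Hence $\mathbb{E}|U_t|^p\le C+C\int_0^t\mathbb{E}|U_s|^p\,ds$, and Gronwall's inequality gives \eqref{p7 eq2}. The estimate \eqref{p7 eq2-3} follows by the same scheme applied to \eqref{V_t}: the quadratic term $\tfrac12\int_0^t\nabla^2 b(s,\cdot,\mathcal{L}_{X_{s,\varepsilon}})(x_s)U_s^2\,ds$ is handled via the boundedness of $\nabla^2 b$ (from the Lipschitz estimate in (A2)(ii)), H\"older, and the already established \eqref{p7 eq2} with exponent $2p$; the term $\int_0^t\nabla b(\cdots)V_s\,ds$ yields $C\int_0^t\mathbb{E}|V_s|^p\,ds$; and $\varepsilon^{-H}\int_0^t(\sigma(s,\mathcal{L}_{X_{s,\varepsilon}})-\sigma(s,\mathcal{L}_{x_s}))\,dB_s^H$ is $O(1)$ in $L^p$, since its $p$-th moment is $\le C\int_0^T K(s)^p\mathds{W}_\theta(\mathcal{L}_{X_{s,\varepsilon}},\mathcal{L}_{x_s})^p\,ds\le C\varepsilon^{pH}$ by Lemma~\ref{p3}. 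A second application of Gronwall gives \eqref{p7 eq2-3}.

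For \eqref{p7 eq1} I would compare the integral equations satisfied by $\tilde X_{t,\varepsilon}$ and $U_t$. Subtracting \eqref{ODE} from \eqref{SDE} and dividing by $\varepsilon^H$ gives
\[
\tilde X_{t,\varepsilon}=\frac{1}{\varepsilon^H}\int_0^t\big(b(s,X_{s,\varepsilon},\mathcal{L}_{X_{s,\varepsilon}})-b(s,x_s,\mathcal{L}_{x_s})\big)\,ds+\int_0^t\sigma(s,\mathcal{L}_{X_{s,\varepsilon}})\,dB_s^H.
\]
I then split $b(s,X_{s,\varepsilon},\mathcal{L}_{X_{s,\varepsilon}})-b(s,x_s,\mathcal{L}_{x_s})=\big(b(s,X_{s,\varepsilon},\mathcal{L}_{X_{s,\varepsilon}})-b(s,x_s,\mathcal{L}_{X_{s,\varepsilon}})\big)+\big(b(s,x_s,\mathcal{L}_{X_{s,\varepsilon}})-b(s,x_s,\mathcal{L}_{x_s})\big)$ and Taylor-expand the first bracket in the space variable about $x_s$, writing it as $\nabla b(s,\cdot,\mathcal{L}_{X_{s,\varepsilon}})(x_s)(X_{s,\varepsilon}-x_s)+r_s$ with $|r_s|\le C|X_{s,\varepsilon}-x_s|^2$ by the Lipschitz continuity of $\nabla b$ in (A2)(ii). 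Inserting this, using $X_{s,\varepsilon}-x_s=\varepsilon^H\tilde X_{s,\varepsilon}$, and subtracting \eqref{U_t} --- whose middle term is precisely the one designed to cancel the measure-difference contribution --- I obtain the linear equation
\[
\tilde X_{t,\varepsilon}-U_t=\int_0^t\nabla b(s,\cdot,\mathcal{L}_{X_{s,\varepsilon}})(x_s)\big(\tilde X_{s,\varepsilon}-U_s\big)\,ds+R_{t,\varepsilon},
\]
where $R_{t,\varepsilon}$ gathers $\varepsilon^{-H}\int_0^t r_s\,ds$ and $\int_0^t(\sigma(s,\mathcal{L}_{X_{s,\varepsilon}})-\sigma(s,\mathcal{L}_{x_s}))\,dB_s^H$. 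By Lemma~\ref{p3} with exponent $2p$, the first term satisfies $\mathbb{E}\big|\varepsilon^{-H}\int_0^t r_s\,ds\big|^p\le C\varepsilon^{-pH}\int_0^t\mathbb{E}|X_{s,\varepsilon}-x_s|^{2p}\,ds\le C\varepsilon^{pH}$, while the second has $p$-th moment $\le C\varepsilon^{pH}$ as above; thus $\mathbb{E}|R_{t,\varepsilon}|^p\le C\varepsilon^{pH}$, and since $|\nabla b|\le K_1$, Gronwall yields $\mathbb{E}|\tilde X_{t,\varepsilon}-U_t|^p\le C\varepsilon^{pH}$, which is \eqref{p7 eq1}.

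I expect the main obstacle to be the careful treatment of the diffusion term. Since $\sigma(s,\mathcal{L}_{X_{s,\varepsilon}})-\sigma(s,\mathcal{L}_{x_s})$ is a deterministic function of $s$, the required $L^p$-bound for its Wiener integral against $B^H$ is in essence a Gaussian moment estimate governed by the $\mathcal{K}_H$-norm of the integrand, which one must verify is of the same order as the bound provided by Proposition~\ref{Lemma 2}; likewise, to invoke Proposition~\ref{Lemma 2} for the genuinely random integrands one needs the curves $s\mapsto\mathcal{L}_{x_s}$ and $s\mapsto\mathcal{L}_{X_{s,\varepsilon}}$ to belong to $C([0,T];\mathcal{F}_p(\mathbb{R}))$, which follows routinely from path continuity and the moment bounds of Lemmas~\ref{p1} and \ref{p3}. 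Apart from this, each step is a standard Gronwall loop with constants depending only on $t$, $p$ and $H$.
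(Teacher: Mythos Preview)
Your proposal is correct and follows essentially the same route as the paper: split the defining integral equations, Taylor-expand $b$ in the space variable about $x_s$, control the quadratic remainder and the $\sigma$-difference via Lemma~\ref{p3} and Proposition~\ref{Lemma 2}, and close with Gronwall. The only cosmetic differences are that the paper proves \eqref{p7 eq1} first (writing the Taylor remainder with the explicit $\nabla^2 b$ factor rather than via Lipschitz continuity of $\nabla b$) and then \eqref{p7 eq2}, whereas you reverse the order; neither choice affects the argument.
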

\begin{proof}
	Recall the definitions of $\tilde{X}_{t,\varepsilon}$ in Theorem \ref{Th1} and $U_t$ in \eqref{U_t}, we have
	\begin{align}{\label{p7 eq3}}
		\tilde{X}_{t,\varepsilon}-U_t&=
		\frac{1}{\varepsilon^H}\int_0^t{\left( b( s,X_{s,\varepsilon},\mathcal{L}_{X_{s,\varepsilon}}) -b( s,x_s,\mathcal{L}_{X_{s,\varepsilon}} ) \right)}ds \notag \\
		&\qquad+\int_0^t{\left( \sigma ( s,\mathcal{L}_{X_{s,\varepsilon}}) -\sigma ( s,\mathcal{L}_{x_s}) \right) dB_{s}^{H}}-\int_0^t{{\nabla b( s,\cdot,\mathcal{L}_{X_{s,\varepsilon}})(x_s) U_sds}}.
	\end{align}
	Then by the Taylor's expansion
	\begin{align}{\label{p7 eq4}}
		&b( s,X_{s,\varepsilon},\mathcal{L}_{X_{s,\varepsilon}}) -b( s,x_s,\mathcal{L}_{X_{s,\varepsilon}} )\notag \\
		&={\nabla b( s,\cdot,\mathcal{L}_{X_{s,\varepsilon}})(x_s) ( X_{s,\varepsilon}-x_s ) } +\frac{1}{2}{{\nabla^2 b( s,\cdot ,\mathcal{L}_{X_{s,\varepsilon}} )(x_s+\eta ( X_{s,\varepsilon}-x_s )) ( X_{s,\varepsilon}-x_s )^2}},
	\end{align}
we can see
	\begin{align*}
		\mathbb{E}|\tilde{X}_{t,\varepsilon}-U_t|^p
		&\le C\mathbb{E}\left| \int_0^t{{\nabla b( s,\cdot,\mathcal{L}_{X_{s,\varepsilon}})(x_s) \left( \tilde{X}_{s,\varepsilon}-U_s \right) }ds} \right|^p\\
		&\ \ \ \ \ \ \ \ +\frac{C}{\varepsilon ^{pH}}\mathbb{E}\left| \int_0^t{{{\nabla^2 b( s,\cdot ,\mathcal{L}_{X_{s,\varepsilon}} )(x_s+\eta ( X_{s,\varepsilon}-x_s ))  ( X_{s,\varepsilon}-x_s )^2}}ds} \right|^p\\
		&\ \ \ \ \ \ \ \ +C\mathbb{E}\left| \int_0^t{\left( \sigma ( s,\mathcal{L}_{X_{s,\varepsilon}}) -\sigma ( s,\mathcal{L}_{x_s}) \right) dB_{s}^{H}} \right|^p .
	\end{align*}
	Then by the H{\"o}lder inequality and \eqref{Lemma2 eq1}, we have
	\begin{align*}
		\mathbb{E}|\tilde{X}_{t,\varepsilon}-U_t|^p
		&\le C\int_0^t{\mathbb{E}\big[ {\left| \nabla b( s,\cdot,\mathcal{L}_{X_{s,\varepsilon}})(x_s) \right|^p|\tilde{X}_{s,\varepsilon}-U_s|^p} \big] ds}\\
		&\ \ \ \ \ \ \ \ +\frac{C}{\varepsilon ^{pH}}\int_0^t{\mathbb{E}\big[{{\left| \nabla^2 b( s,\cdot ,\mathcal{L}_{X_{s,\varepsilon}} )(x_s+\eta ( X_{s,\varepsilon}-x_s ))  \right|^p|X_{s,\varepsilon}-x_s|^{2p}}} \big]}\\
		&\ \ \ \ \ \ \ \ +C\int_0^t{||\sigma ( s,\mathcal{L}_{X_{s,\varepsilon}}) -\sigma ( s,\mathcal{L}_{x_s}) ||^pds}.
	\end{align*}
	Thus by assumptions (A1)--(A2), \eqref{p3 eq1} and \eqref{df of W distance} combining the H{\"o}lder inequality, for any $p \ge \theta$,
	\begin{align*}
		\mathbb{E}|\tilde{X}_{t,\varepsilon}-U_t|^p&\le C\int_0^t{\mathbb{E}[ |\tilde{X}_{s,\varepsilon}-U_s|^p ] ds}+\frac{C}{\varepsilon ^{pH}}\int_0^t{\mathbb{E}[ |X_{s,\varepsilon}-x_s|^{2p} ]}
		+C\int_0^t{|\mathds{W}_{\theta}( \mathcal{L}_{X_{s,\varepsilon}},\mathcal{L}_{x_s} )| ^pds}\\
		&\le C\int_0^t{\mathbb{E}\left[ |\tilde{X}_{s,\varepsilon}-U_s|^p \right] ds}+C\varepsilon ^{pH}.
	\end{align*}
	Finally, we can use Gronwall's inequality to obtain (\ref{p7 eq1}).
	
	It remains to prove \eqref{p7 eq2} and \eqref{p7 eq2-3}.
	\begin{align*}
		\mathbb{E}|U_t|^p&\le C\mathbb{E}\left| \int_0^t{{\nabla b( s,\cdot,\mathcal{L}_{X_{s,\varepsilon}})(x_s) U_sds}} \right|^p\\
		&\ \ \ \ +\frac{C}{\varepsilon ^{pH}}\mathbb{E}\left| \int_0^t{( b( s,x_s,\mathcal{L}_{X_{s,\varepsilon}} ) -b( s,x_s,\mathcal{L}_{x_s} ) ) ds} \right|^p+C\mathbb{E}\left| \int_0^t{\sigma ( s,\mathcal{L}_{x_s}) dB_{s}^{H}} \right|^p.
	\end{align*}
	By the assumptions (A1)--(A2) and \eqref{Lemma2 eq1}, we have
	\begin{align*}
		\mathbb{E}|U_t|^p
		&\le C\,\int_0^t{\mathbb{E}}\left| {\nabla b( s,\cdot,\mathcal{L}_{X_{s,\varepsilon}})(x_s) U_s} \right|^pds\\
		&\ \ \ \ +\frac{C}{\varepsilon ^{pH}}\int_0^t{\mathbb{E}| b( s,x_s,\mathcal{L}_{X_{s,\varepsilon}} ) -b( s,x_s,\mathcal{L}_{x_s} )|^pds}+C\int_0^{t}||\sigma ( s,\mathcal{L}_{x_s}) ||^pds\\
		&\le C\int_0^t{\mathbb{E}}| U_s|^pds+\frac{C}{\varepsilon ^{pH}}\int_0^t{\mathbb{E}| \mathds{W}_{\theta}( \mathcal{L}_{X_{s,\varepsilon}},\mathcal{L}_{x_s} ) |^pds}+C\int_0^t{||\sigma ( s,\mathcal{L}_{x_s}) ||^pds}.
	\end{align*}
	Then, by \eqref{df of W distance} and the H{\"o}lder inequality, for any $p \ge \theta$,
	\begin{align*}
		\mathbb{E}|U_t|^p
		&\le C\int_0^t{\mathbb{E}}| U_s|^pds+\frac{C}{\varepsilon ^{pH}}\int_0^t{\mathbb{E}| X_{s,\varepsilon}-x_s |^pds}+C\\
		&\le C\int_0^t{\mathbb{E}}| U_s|^pds+C.
	\end{align*}
	Finally, we can use Gronwall's inequality to obtain \eqref{p7 eq2}.

Note that, by using the similar method as the proof of \eqref{p7 eq2}, we can easily get that
for any $p\ge\theta$ and $p>\frac{1}{H}$, $\mathbb{E}|V_t|^p \le C(t,p,H)$.
\end{proof}

\begin{lemma}{\label{p8}}
	Suppose the assumptions (A1) and (A2) hold.
	Then, we have
	\begin{equation*}
		\sup_{t\in [ 0,T ]}\int_0^t{\mathbb{E}|D_r\tilde{X}_{t,\varepsilon}-D_rU_t|^2dr}\le C_{12}(t,H)\varepsilon ^{2H},
	\end{equation*}
where  $C_{12}(t,H)$ is a positive constant only depending on $t$, $U_t$ is defined as \eqref{U_t}.
\end{lemma}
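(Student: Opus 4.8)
The plan is to derive the linear integral equations satisfied by the Malliavin derivatives $D_r\tilde{X}_{t,\varepsilon}$ and $D_rU_t$, subtract them, and close the resulting $L^2$-estimate by Gronwall's inequality after integrating out the variable $r$. Dividing \eqref{p1 eq2} by $\varepsilon^H$ gives
\begin{equation*}
D_r\tilde{X}_{t,\varepsilon}=\int_r^t\nabla b(s,\cdot,\mathcal{L}_{X_{s,\varepsilon}})(X_{s,\varepsilon})\,D_r\tilde{X}_{s,\varepsilon}\,ds+\int_r^t\sigma(s,\mathcal{L}_{X_{s,\varepsilon}})\frac{\partial K_H(s,r)}{\partial s}\,ds,
\end{equation*}
while, observing that every coefficient in \eqref{U_t} (namely $\nabla b(s,\cdot,\mathcal{L}_{X_{s,\varepsilon}})(x_s)$, $\varepsilon^{-H}(b(s,x_s,\mathcal{L}_{X_{s,\varepsilon}})-b(s,x_s,\mathcal{L}_{x_s}))$ and $\sigma(s,\mathcal{L}_{x_s})$) is deterministic, I would differentiate \eqref{U_t} using \eqref{fBm 1}--\eqref{fBm 2} and the fact that $D_rU_s=0$ for $s<r$ to obtain the deterministic kernel
\begin{equation*}
D_rU_t=\int_r^t\nabla b(s,\cdot,\mathcal{L}_{X_{s,\varepsilon}})(x_s)\,D_rU_s\,ds+\int_r^t\sigma(s,\mathcal{L}_{x_s})\frac{\partial K_H(s,r)}{\partial s}\,ds.
\end{equation*}
Subtracting and inserting $\pm\,\nabla b(s,\cdot,\mathcal{L}_{X_{s,\varepsilon}})(x_s)D_r\tilde{X}_{s,\varepsilon}$ in the drift yields
\begin{align*}
D_r\tilde{X}_{t,\varepsilon}-D_rU_t&=\int_r^t\nabla b(s,\cdot,\mathcal{L}_{X_{s,\varepsilon}})(X_{s,\varepsilon})\big(D_r\tilde{X}_{s,\varepsilon}-D_rU_s\big)\,ds\\
&\quad+\int_r^t\big[\nabla b(s,\cdot,\mathcal{L}_{X_{s,\varepsilon}})(X_{s,\varepsilon})-\nabla b(s,\cdot,\mathcal{L}_{X_{s,\varepsilon}})(x_s)\big]D_rU_s\,ds\\
&\quad+\int_r^t\big[\sigma(s,\mathcal{L}_{X_{s,\varepsilon}})-\sigma(s,\mathcal{L}_{x_s})\big]\frac{\partial K_H(s,r)}{\partial s}\,ds.
\end{align*}

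Next I would estimate $\mathbb{E}|D_r\tilde{X}_{t,\varepsilon}-D_rU_t|^2$ term by term. For the leading drift term, (A2)(i) gives $|\nabla b|\le K_1$, so its contribution is $\le C\int_r^t\mathbb{E}|D_r\tilde{X}_{s,\varepsilon}-D_rU_s|^2\,ds$. For the remainder term I would use the Lipschitz bound from (A2)(ii), $|\nabla b(s,\cdot,\mathcal{L}_{X_{s,\varepsilon}})(X_{s,\varepsilon})-\nabla b(s,\cdot,\mathcal{L}_{X_{s,\varepsilon}})(x_s)|\le K_1(s)|X_{s,\varepsilon}-x_s|$, together with the fact that $D_rU_s$ is deterministic, so that its $L^2$-contribution is $\le C|D_rU_s|^2\,\mathbb{E}|X_{s,\varepsilon}-x_s|^2\le C|D_rU_s|^2\varepsilon^{2H}$ by Lemma \ref{p3}. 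For the $\sigma$-difference term, which is deterministic, I would use $|\sigma(s,\mathcal{L}_{X_{s,\varepsilon}})-\sigma(s,\mathcal{L}_{x_s})|\le K(s)\mathds{W}_\theta(\mathcal{L}_{X_{s,\varepsilon}},\mathcal{L}_{x_s})\le K(s)\big(\mathbb{E}|X_{s,\varepsilon}-x_s|^\theta\big)^{1/\theta}\le C\varepsilon^H$ (again Lemma \ref{p3}) together with $\partial K_H/\partial s\ge0$ to get $\big|\int_r^t[\sigma(s,\mathcal{L}_{X_{s,\varepsilon}})-\sigma(s,\mathcal{L}_{x_s})]\frac{\partial K_H(s,r)}{\partial s}ds\big|\le C\varepsilon^H K_H(t,r)$.

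Then I would set $\phi(t):=\int_0^t\mathbb{E}|D_r\tilde{X}_{t,\varepsilon}-D_rU_t|^2\,dr$, integrate the pointwise bound over $r\in[0,t]$, apply Cauchy--Schwarz to the time integrals, and swap the order of integration via $\int_0^t\int_r^t(\cdot)\,ds\,dr=\int_0^t\int_0^s(\cdot)\,dr\,ds$ to arrive at
\begin{equation*}
\phi(t)\le C\int_0^t\phi(s)\,ds+C\varepsilon^{2H}\int_0^t\Big(\int_0^s|D_rU_s|^2\,dr\Big)ds+C\varepsilon^{2H}\int_0^tK_H(t,r)^2\,dr.
\end{equation*}
Both $\varepsilon^{2H}$-terms are $O(\varepsilon^{2H})$: indeed $\int_0^tK_H(t,r)^2\,dr=t^{2H}$, and a Gronwall bound applied to the (deterministic, $\sigma$-driven) equation for $D_rU_s$ gives $|D_rU_s|\le CK_H(s,r)$, hence $\int_0^s|D_rU_s|^2\,dr\le Cs^{2H}\le CT^{2H}$. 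A final application of Gronwall's inequality to $\phi$ then gives $\phi(t)\le C_{12}(t,H)\varepsilon^{2H}$ uniformly on $[0,T]$, which is the claim.

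The hard part will be the bookkeeping of the remainder term $[\nabla b(s,\cdot,\mathcal{L}_{X_{s,\varepsilon}})(X_{s,\varepsilon})-\nabla b(s,\cdot,\mathcal{L}_{X_{s,\varepsilon}})(x_s)]D_rU_s$: one must use simultaneously the Lipschitz estimate from (A2)(ii), the $\varepsilon^H$-closeness of $X_{s,\varepsilon}$ to $x_s$ from Lemma \ref{p3}, and a uniform $L^2$-in-$r$ bound on the deterministic kernel $D_rU_s$, and it is the squaring of the resulting $\varepsilon^H$ that produces the claimed rate $\varepsilon^{2H}$. The remaining inputs — $\mathbb{D}^{2,4}$-regularity of $\tilde{X}_{t,\varepsilon}$ (Lemma \ref{p2}) and of $U_t$ so the derivatives make sense, the moment bounds of $U$ from Lemma \ref{p7}, and the positivity/integrability of $\partial K_H/\partial s$ so that $\int_r^t\frac{\partial K_H(s,r)}{\partial s}ds=K_H(t,r)$ and $\int_0^tK_H(t,r)^2dr=t^{2H}$ — are routine.
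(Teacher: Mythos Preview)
Your argument is correct, but it proceeds differently from the paper's. The paper does not set up a Gronwall loop on $\phi(t)=\int_0^t\mathbb{E}|D_r\tilde X_{t,\varepsilon}-D_rU_t|^2\,dr$; instead it solves both linear equations explicitly via the variation-of-constants formula,
\[
D_r\tilde X_{t,\varepsilon}=\int_r^t\sigma(s,\mathcal{L}_{X_{s,\varepsilon}})\tfrac{\partial K_H(s,r)}{\partial s}\,e^{-\int_s^t\nabla b(u,\cdot,\mathcal{L}_{X_{u,\varepsilon}})(X_{u,\varepsilon})\,du}\,ds,
\qquad
D_rU_t=\int_r^t\sigma(s,\mathcal{L}_{x_s})\tfrac{\partial K_H(s,r)}{\partial s}\,e^{-\int_s^t\nabla b(u,\cdot,\mathcal{L}_{X_{u,\varepsilon}})(x_u)\,du}\,ds,
\]
and then compares these two closed-form expressions directly: one telescoping gives a $\sigma(\mathcal{L}_{X_{s,\varepsilon}})-\sigma(\mathcal{L}_{x_s})$ factor, the other a difference of exponentials controlled by $\sup_u|X_{u,\varepsilon}-x_u|$ via the Lipschitz property of $x\mapsto\nabla b(\cdot)(x)$. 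Together with Lemma~\ref{p3} this already yields the \emph{pointwise}-in-$r$ bound $\mathbb{E}|D_r\tilde X_{t,\varepsilon}-D_rU_t|^2\le C\varepsilon^{2H}$, so the $dr$-integration is trivial. Your Gronwall route avoids writing down the explicit exponential solutions and would transfer more easily to situations where the linearised equation cannot be solved in closed form; the paper's route is shorter here and gives the slightly stronger pointwise-in-$r$ estimate. One small slip: the cross term you insert should be $\pm\,\nabla b(s,\cdot,\mathcal{L}_{X_{s,\varepsilon}})(X_{s,\varepsilon})D_rU_s$ (not $\nabla b(\cdot)(x_s)D_r\tilde X_{s,\varepsilon}$) to produce the decomposition you display, but the displayed identity itself is correct.
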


\begin{proof}
	Compute the Malliavin derivative of $U_t$
	\begin{equation*}
		D_rU_t=\int_r^t{{\nabla b( s,\cdot,\mathcal{L}_{X_{s,\varepsilon}})(x_s) D_rU_sds}}+ \int_r^t{\sigma ( s,\mathcal{L}_{x_s}) \frac{\partial K_H( s,r )}{\partial s}ds}.
	\end{equation*}
	By solving this equation, we can get
	\begin{equation*}
		D_rU_t= \int_r^t{\sigma ( s,\mathcal{L}_{x_s}) \frac{\partial K_H( s,r )}{\partial s}e^{-\int_s^t{\nabla b( u,\cdot,\mathcal{L}_{X_{u,\varepsilon}})(x_u) du}}ds}.
	\end{equation*}
	Recall the solution of $D_rX_{t,\varepsilon}$ in \eqref{solution of DrX}, and the definition of $D_r\tilde{X}_{t,\varepsilon}$,
	\begin{equation*}
		D_r\tilde{X}_{t,\varepsilon}=\frac{1}{\varepsilon^H}D_rX_{t,\varepsilon}= \int_r^t{\sigma ( s,\mathcal{L}_{X_{s,\varepsilon}}) \frac{\partial K_H( s,r )}{\partial s}e^{-\int_s^t{\nabla b( u,\cdot,\mathcal{L}_{X_{u,\varepsilon}} )(X_{u,\varepsilon}) du}}ds}.
	\end{equation*}
	Then, by the H{\"o}lder inequality and the assumptions (A1)--(A2)
	\begin{align*}
		&\mathbb{E}|D_r\tilde{X}_{t,\varepsilon}-D_rU_t|^2\\
		&=\mathbb{E}\left| \int_r^t{}\frac{\partial K_H( s,r )}{\partial s}\left[ \left( \sigma ( s,\mathcal{L}_{X_{s,\varepsilon}}) -\sigma ( s,\mathcal{L}_{x_s}) \right) e^{-\int_s^t{\nabla b( u,\cdot,\mathcal{L}_{X_{u,\varepsilon}} )(X_{u,\varepsilon}) du}}\right.\right.\\
		&\ \ \ \ \ \ \ \ \ \ \ \ \left.\left.+\sigma ( s,\mathcal{L}_{x_s}) \left( e^{-\int_s^t{\nabla b( u,\cdot,\mathcal{L}_{X_{u,\varepsilon}} )(X_{u,\varepsilon}) du}}-e^{-\int_s^t{\nabla b( u,\cdot,\mathcal{L}_{X_{u,\varepsilon}})(x_s) du}} \right) \right] ds \right|^2\\
		&\le {C}\left\{ \mathbb{E}\left| \int_r^t{\frac{\partial K_H( s,r )}{\partial s}\left( \sigma ( s,\mathcal{L}_{X_{s,\varepsilon}}) -\sigma ( s,\mathcal{L}_{x_s}) \right) ds} \right|^2\right.\\
		&\ \ \ \ \ \ \ \ \ \ \ \ \left.+\mathbb{E}\left| \int_r^t{\frac{\partial K_H( s,r )}{\partial s}\left(e^{-\int_s^t{\nabla b( u,\cdot,\mathcal{L}_{X_{u,\varepsilon}} )(X_{u,\varepsilon}) du}}-e^{-\int_s^t{\nabla b( u,\cdot,\mathcal{L}_{X_{u,\varepsilon}})(x_u) du}}\right) ds} \right|^2 \right\} \\
		&\le C\mathbb{E}\left| \sup_s\left( \sigma ( s,\mathcal{L}_{X_{s,\varepsilon}} ) -\sigma ( s,\mathcal{L}_{x_s} ) \right) \int_r^t{\frac{\partial K_H( s,r )}{\partial s}ds} \right|^2+C{\mathbb{E}\left|\sup_u(X_{u,\varepsilon}-x_u)\int_r^t{\frac{\partial K_H( s,r )}{\partial s}ds}\right|^2ds}\\
	    &\le C\mathbb{E}\left[ \sup_s|X_{s,\varepsilon}-x_s|^p \right] +C{\mathbb{E}\left[\sup_u|X_{u,\varepsilon}-x_u|^2\right]}.
	\end{align*}
	Thus, by \eqref{p3 eq1},
	\begin{equation*}
		\mathbb{E}|D_r\tilde{X}_{t,\varepsilon}-D_rU_t|^2 \le C\varepsilon^{2H}.
	\end{equation*}
\end{proof}
\begin{lemma}{\label{p9}}Under the assumptions (A1) and (A2), for each $p \ge \theta$ and $p > \frac{1}{H}$,
	we have
	\begin{equation}{\label{p9 eq1}}
		\lim_{\varepsilon \rightarrow 0}\mathbb{E}\left| \frac{\tilde{X}_{t,\varepsilon}-U_t}{\varepsilon^H}-V_t \right|^p=0,
	\end{equation}
	where $U_t$ and $V_t$ are defined as \eqref{U_t} and \eqref{V_t}, respectively.
\end{lemma}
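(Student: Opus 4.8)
\textit{Proof sketch.} The plan is to show that the rescaled error
\[
W_{t,\varepsilon}:=\varepsilon^{-H}\big(\tilde X_{t,\varepsilon}-U_t\big)-V_t
\]
solves a linear integral equation with a forcing term that vanishes as $\varepsilon\to0$, and then to close the bound by Gronwall's inequality exactly as in Lemmas \ref{p4} and \ref{p7}. Starting from the identity \eqref{p7 eq3} for $\tilde X_{t,\varepsilon}-U_t$, I would insert the second--order Taylor expansion \eqref{p7 eq4} of $b(s,X_{s,\varepsilon},\mathcal L_{X_{s,\varepsilon}})-b(s,x_s,\mathcal L_{X_{s,\varepsilon}})$, divide by $\varepsilon^H$, and use $(X_{s,\varepsilon}-x_s)/\varepsilon^H=\tilde X_{s,\varepsilon}$ to obtain
\[
\varepsilon^{-H}\big(\tilde X_{t,\varepsilon}-U_t\big)=\int_0^t\nabla b(s,\cdot,\mathcal L_{X_{s,\varepsilon}})(x_s)\,\varepsilon^{-H}\big(\tilde X_{s,\varepsilon}-U_s\big)ds+\tfrac12\int_0^t\nabla^2 b(s,\cdot,\mathcal L_{X_{s,\varepsilon}})(\xi_s)\tilde X_{s,\varepsilon}^2\,ds+\varepsilon^{-H}\!\int_0^t\!\big(\sigma(s,\mathcal L_{X_{s,\varepsilon}})-\sigma(s,\mathcal L_{x_s})\big)dB_s^H,
\]
where $\xi_s=x_s+\eta_s(X_{s,\varepsilon}-x_s)$ with $\eta_s\in(0,1)$. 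The key structural point is that the last, stochastic, term is precisely the fBm integral appearing in the definition \eqref{V_t} of $V_t$, so subtracting $V_t$ cancels it and leaves the purely deterministic equation
\[
W_{t,\varepsilon}=\int_0^t\nabla b(s,\cdot,\mathcal L_{X_{s,\varepsilon}})(x_s)\,W_{s,\varepsilon}\,ds+\tfrac12\int_0^t R_{s,\varepsilon}\,ds,\qquad R_{s,\varepsilon}:=\nabla^2 b(s,\cdot,\mathcal L_{X_{s,\varepsilon}})(\xi_s)\tilde X_{s,\varepsilon}^2-\nabla^2 b(s,\cdot,\mathcal L_{X_{s,\varepsilon}})(x_s)U_s^2.
\]

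Since $|\nabla b(s,\cdot,\mu)(x)|\le K_1(s)$ by (A2)(i), Hölder's inequality yields $\mathbb E|W_{t,\varepsilon}|^p\le C\int_0^t\mathbb E|W_{s,\varepsilon}|^p ds+C\int_0^t\mathbb E|R_{s,\varepsilon}|^p ds$, so by Gronwall's inequality it suffices to prove $\int_0^T\mathbb E|R_{s,\varepsilon}|^p ds\to0$ (throughout one may take $p$ as large as needed and apply Lemmas \ref{p3} and \ref{p7} with higher exponents). I would split $R_{s,\varepsilon}=\big[\nabla^2 b(s,\cdot,\mathcal L_{X_{s,\varepsilon}})(\xi_s)-\nabla^2 b(s,\cdot,\mathcal L_{X_{s,\varepsilon}})(x_s)\big]\tilde X_{s,\varepsilon}^2+\nabla^2 b(s,\cdot,\mathcal L_{X_{s,\varepsilon}})(x_s)\,(\tilde X_{s,\varepsilon}^2-U_s^2)$. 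For the second summand, write $\tilde X_{s,\varepsilon}^2-U_s^2=(\tilde X_{s,\varepsilon}-U_s)(\tilde X_{s,\varepsilon}+U_s)$; the Cauchy--Schwarz inequality together with $\mathbb E|\tilde X_{s,\varepsilon}-U_s|^{2p}\le C\varepsilon^{2pH}$ from \eqref{p7 eq1}, the boundedness $|\nabla^2 b|\le K_1$, and the uniform moment bounds ($\sup_\varepsilon\mathbb E|\tilde X_{s,\varepsilon}|^{q}<\infty$ from Lemma \ref{p3}, and \eqref{p7 eq2} for $U_s$) shows this summand is $O(\varepsilon^{pH})$ in $L^p$. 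For the first summand, $|\xi_s-x_s|\le|X_{s,\varepsilon}-x_s|\to0$ in every $L^p$ by Lemma \ref{p3}, while $|\nabla^2 b|\le K_1(s)$ is bounded; invoking the continuity of $\nabla^2 b(s,\cdot,\mu)$ in the spatial variable, the bracket tends to $0$ in probability, and since $\{|\tilde X_{s,\varepsilon}|^{2p}\}_\varepsilon$ is uniformly integrable (being $L^{1+\delta}$--bounded by Lemma \ref{p3}) the product tends to $0$ in $L^p$; if $\nabla^2 b$ is moreover Lipschitz in $x$ one gets the explicit bound $|R^{(1)}_{s,\varepsilon}|\le C\varepsilon^H|\tilde X_{s,\varepsilon}|^3$, hence $O(\varepsilon^{pH})$. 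Thus $\mathbb E|R_{s,\varepsilon}|^p\to0$ for each $s$, and it is dominated on $[0,T]$ by $C(1+\sup_\varepsilon\mathbb E|\tilde X_{s,\varepsilon}|^{2p}+\mathbb E|U_s|^{2p})$, which is integrable; so $\int_0^T\mathbb E|R_{s,\varepsilon}|^p ds\to0$ and therefore $\mathbb E|W_{t,\varepsilon}|^p\to0$, which is the claim.

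The step I expect to be the main obstacle is the control of the first summand of $R_{s,\varepsilon}$: the Lagrange form of the Taylor remainder forces $\nabla^2 b$ to be evaluated at the random intermediate point $\xi_s$ rather than at $x_s$, so one must exploit some regularity of the second spatial derivative beyond the mere boundedness used in the earlier estimates, and pair it with a uniform-integrability argument for $\tilde X_{s,\varepsilon}^2$. Everything else is routine once the $\sigma$-integrals have cancelled and the rate estimate \eqref{p7 eq1} for $\tilde X_{s,\varepsilon}-U_s$ is in hand.
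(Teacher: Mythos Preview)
Your proposal is correct and follows essentially the same route as the paper: derive the integral equation for $\varepsilon^{-H}(\tilde X_{t,\varepsilon}-U_t)$ from \eqref{p7 eq3}--\eqref{p7 eq4}, observe that the fBm integral matches the one in \eqref{V_t} and cancels, split the $\nabla^2 b$ remainder into a ``$\tilde X^2-U^2$'' piece (controlled by \eqref{p7 eq1}--\eqref{p7 eq2} and Cauchy--Schwarz, giving $O(\varepsilon^{pH})$) and a ``$\nabla^2 b(\xi_s)-\nabla^2 b(x_s)$'' piece (handled by continuity of $\nabla^2 b$ plus dominated convergence / uniform integrability), and close with Gronwall. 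The only cosmetic difference is the telescoping: the paper writes $\nabla^2 b(\xi_s)(\tilde X_{s,\varepsilon}^2-U_s^2)+[\nabla^2 b(\xi_s)-\nabla^2 b(x_s)]U_s^2$ whereas you write $[\nabla^2 b(\xi_s)-\nabla^2 b(x_s)]\tilde X_{s,\varepsilon}^2+\nabla^2 b(x_s)(\tilde X_{s,\varepsilon}^2-U_s^2)$, which is immaterial since $\nabla^2 b$ is bounded and both $U_s$ and $\tilde X_{s,\varepsilon}$ have uniform $L^q$ bounds.
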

\begin{proof}
	By \eqref{p7 eq3} and \eqref{p7 eq4},
	\begin{align*}
		\frac{\tilde{X}_{t,\varepsilon}-U_t}{\varepsilon^H}
		&=\int_0^t{{\nabla b( s,\cdot,\mathcal{L}_{X_{s,\varepsilon}})(x_s) \left( \frac{\tilde{X}_{s,\varepsilon}-U_s}{\varepsilon^H} \right) ds}}
		+\frac12\int_0^t{{ \nabla^2 b( s,\cdot ,\mathcal{L}_{X_{s,\varepsilon}} )(x_s+\eta ( X_{s,\varepsilon}-x_s ))  \tilde{X}_{s,\varepsilon}^{2}ds}}\\
		&\qquad+\frac{1}{\varepsilon^H}\int_0^t{\left(\sigma(s,\mathcal{L}_{X_{s,\varepsilon}})-\sigma(s,\mathcal{L}_{x_s})\right)dB_s^H}.
	\end{align*}
	Then, recall the definition of $V_t$, we have
	\begin{align*}
		\mathbb{E}\left| \frac{\tilde{X}_{t,\varepsilon}-U_t}{\varepsilon^H}-V_t \right|^p&\le C\,\mathbb{E}\left| \int_0^t{{\nabla b( s,\cdot,\mathcal{L}_{X_{s,\varepsilon}})(x_s) \left( \frac{\tilde{X}_{s,\varepsilon}-U_s}{\varepsilon^H} -V_s\right) ds}} \right|^p\\
		&+C\,\mathbb{E}\left| \int_0^t{{\left( \nabla^2 b( s,\cdot ,\mathcal{L}_{X_{s,\varepsilon}} )(x_s+\eta ( X_{s,\varepsilon}-x_s ))  \tilde{X}_{s,\varepsilon}^{2}-\nabla^2 b( s,\cdot,\mathcal{L}_{X_{s,\varepsilon}})(x_s) U_{s}^{2} \right)ds }} \right|^p.
	\end{align*}
	By the H{\"o}lder inequality,
	\begin{align*}
\mathbb{E}\left| \frac{\tilde{X}_{t,\varepsilon}-U_t}{\varepsilon ^H}-V_t \right|^p&\le C\int_0^t{\mathbb{E}\left| \frac{\tilde{X}_{s,\varepsilon}-U_s}{\varepsilon ^H}-V_s \right|}^pds\notag\\
&\ \ +C\int_0^t{\mathbb{E}\left| \nabla ^2b\left( s,\cdot ,\mathcal{L}_{X_{s,\varepsilon}} \right) \left( x_s+\eta \left( X_{s,\varepsilon}-x_s \right) \right) \left( \tilde{X}_{s,\varepsilon}^{2}-U_{s}^{2} \right) \right|^p}ds \notag\\
&\ \ +C\int_0^t{\mathbb{E}\left| \left( \nabla ^2b\left( s,\cdot ,\mathcal{L}_{X_{s,\varepsilon}} \right) \left( x_s+\eta \left( X_{s,\varepsilon}-x_s \right) \right) -\nabla ^2b\left( s,\cdot ,\mathcal{L}_{X_{s,\varepsilon}} \right) ( x_s ) \right) U_{s}^{2} \right|^p}ds \notag \\
&=:C\int_0^t{\mathbb{E}\left| \frac{\tilde{X}_{s,\varepsilon}-U_s}{\varepsilon ^H}-V_s \right|}^pds+R_1+R_2.
	\end{align*}
By the assumption (A2), Cauchy-Schwarz inequality, \eqref{p7 eq1} and \eqref{p7 eq2}, we have
	\begin{equation}{\label{R2}}
R_1\le C\int_0^t{\mathbb{E}\left| \tilde{X}_{s,\varepsilon}^{2}-U_{s}^{2} \right|^p}ds
\le C\int_0^t{\sqrt{\mathbb{E}\left| \tilde{X}_{s,\varepsilon}^{}+U_{s}^{} \right|^{2p}\mathbb{E}\left| \tilde{X}_{s,\varepsilon}^{}-U_{s}^{} \right|^{2p}}}ds\le C\varepsilon ^{pH}.
	\end{equation}	
Also, by the Cauchy-Schwarz inequality
\begin{equation*}
	R_2\le C\int_0^t{\sqrt{\mathbb{E}\left| \left( \nabla ^2b\left( s,\cdot ,\mathcal{L}_{X_{s,\varepsilon}} \right) \left( x_s+\eta \left( X_{s,\varepsilon}-x_s \right) \right) -\nabla ^2b\left( s,\cdot ,\mathcal{L}_{X_{s,\varepsilon}} \right) ( x_s ) \right) \right|^{2p}\mathbb{E}\left| U_s \right|^{4p}}}ds
.
\end{equation*}
So that, by \eqref{p7 eq2} and the dominated
convergence theorem, we obtain that
\begin{equation}{\label{R3}}
	R_2 \rightarrow 0,\ \  as\ \  \varepsilon \rightarrow 0.
\end{equation}
Thus,
	\begin{equation}{\label{R123}}
		\mathbb{E}\left| \frac{\tilde{X}_{t,\varepsilon}-U_t}{\varepsilon^H}-V_t \right|^p \le C(R_1+R_2).
	\end{equation}
Combining \eqref{R2}, \eqref{R3} and \eqref{R123}, we can obtain \eqref{p9 eq1}.

\end{proof}

\begin{lemma}{\label{p10}}Suppose the assumptions (A1) and (A2) hold, then for each continuous and bounded function $g$ and each $t\in [0,t]$, we have
	\begin{equation}{\label{p10 eq1}}
		\lim_{\varepsilon \rightarrow 0}\frac{\mathbb{E}\left[ g\left( \tilde{X}_{t,\varepsilon} \right) \right] -\mathbb{E}\left[ g\left( U_t \right) \right]}{\varepsilon^H}=\frac{1}{||DU_t||_{L^2[0,T]}^2}\mathbb{E}[ g( U_t ) \delta ( V_tDU_t) ]
		,
	\end{equation}
	where $U_t$ and $V_t$ are defined as \eqref{U_t} and \eqref{V_t} respectively.
\end{lemma}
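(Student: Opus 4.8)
The plan is to establish \eqref{p10 eq1} first for $g\in C_b^\infty(\mathbb R)$ and then to remove the smoothness by an approximation argument, following Section~4.3 of \cite{ref15}. For $g\in C_b^\infty$, Taylor's formula gives
\[
g(\tilde X_{t,\varepsilon})-g(U_t)=(\tilde X_{t,\varepsilon}-U_t)\int_0^1 g'\!\big(U_t+\lambda(\tilde X_{t,\varepsilon}-U_t)\big)\,d\lambda ,
\]
hence
\[
\frac{\mathbb E[g(\tilde X_{t,\varepsilon})]-\mathbb E[g(U_t)]}{\varepsilon^H}
=\mathbb E\!\left[\frac{\tilde X_{t,\varepsilon}-U_t}{\varepsilon^H}\int_0^1 g'\!\big(U_t+\lambda(\tilde X_{t,\varepsilon}-U_t)\big)\,d\lambda\right].
\]
The inner integral is bounded by $\|g'\|_\infty$ and, by \eqref{p7 eq1} in Lemma~\ref{p7}, $\tilde X_{t,\varepsilon}\to U_t$ in $L^p$, so it converges to $g'(U_t)$ in every $L^q$; by Lemma~\ref{p9}, $\varepsilon^{-H}(\tilde X_{t,\varepsilon}-U_t)\to V_t$ in $L^p$. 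Combining these two convergences via the Cauchy--Schwarz inequality yields $L^1$-convergence of the product, so that $\lim_{\varepsilon\to0}\varepsilon^{-H}\big(\mathbb E[g(\tilde X_{t,\varepsilon})]-\mathbb E[g(U_t)]\big)=\mathbb E[g'(U_t)V_t]$.

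The next step is to rewrite $\mathbb E[g'(U_t)V_t]$ in the form of the right-hand side of \eqref{p10 eq1} by Malliavin integration by parts. The key structural observation is that, because $x_\cdot$ solves the \emph{deterministic} equation \eqref{ODE} and all measure arguments $\mathcal L_{X_{s,\varepsilon}}$, $\mathcal L_{x_s}$ appearing in \eqref{U_t} are non-random, the derivative $D_rU_t=\int_r^t\sigma(s,\mathcal L_{x_s})\frac{\partial K_H(s,r)}{\partial s}e^{-\int_s^t\nabla b(u,\cdot,\mathcal L_{X_{u,\varepsilon}})(x_u)\,du}\,ds$ is a \emph{deterministic} element of $L^2[0,T]$; in particular $\|DU_t\|_{L^2[0,T]}^2$ is a constant. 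Using $D_r(g(U_t))=g'(U_t)D_rU_t$ one gets the pathwise identity $g'(U_t)V_t\,\|DU_t\|_{L^2[0,T]}^2=\langle D(g(U_t)),V_t\,DU_t\rangle_{L^2[0,T]}$; taking expectations and applying the duality relation underlying \eqref{dual 1} with $F=g(U_t)\in\mathbb D^{1,2}$ and $u=V_t\,DU_t$ — which lies in $\mathrm{Dom}\,\delta$ because $V_t\in\mathbb D^{1,2}$ with all moments finite (Lemma~\ref{p7}, \eqref{p7 eq2-3}) and $DU_t$ is deterministic — yields
\[
\|DU_t\|_{L^2[0,T]}^2\,\mathbb E[g'(U_t)V_t]=\mathbb E\big[g(U_t)\,\delta(V_t\,DU_t)\big],
\]
which is exactly \eqref{p10 eq1} for smooth $g$.

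Finally, for general $g\in C_b(\mathbb R)$ I would choose $g_n\in C_b^\infty(\mathbb R)$ with $\|g_n\|_\infty\le\|g\|_\infty$ and $g_n\to g$ uniformly on compact sets (mollification followed by a smooth cut-off). Since the law of $U_t$ is absolutely continuous (its Malliavin matrix $\|DU_t\|_{L^2[0,T]}^2$ being a strictly positive constant), $g_n(U_t)\to g(U_t)$ a.s., and as $\delta(V_tDU_t)\in L^1$ dominated convergence gives $\mathbb E[g_n(U_t)\delta(V_tDU_t)]\to\mathbb E[g(U_t)\delta(V_tDU_t)]$; it therefore suffices to control $\varepsilon^{-H}\big(\mathbb E[(g-g_n)(\tilde X_{t,\varepsilon})]-\mathbb E[(g-g_n)(U_t)]\big)$ uniformly in small $\varepsilon$. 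Writing $\tilde X_{t,\varepsilon}=U_t+\varepsilon^HV_t+\varepsilon^H\rho_\varepsilon$ with $\mathbb E|\rho_\varepsilon|^p\to0$ (from \eqref{p4 eq3} and \eqref{p9 eq1}), and using the uniform moment bounds of Lemmas~\ref{p4} and \ref{p7} together with the Malliavin estimates of Lemma~\ref{p8} to obtain a crude uniform-in-$\varepsilon$ control of the densities of $\tilde X_{t,\varepsilon}$ and $U_t$, one splits $g-g_n$ into a part supported on a large ball (handled by its small sup-norm) and a tail part (handled by the moment bounds), and lets $n\to\infty$ and then the truncation level tend to infinity, as in \cite{ref15}. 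The main obstacle I expect is precisely this last step: transferring the rate $\varepsilon^H$ from smooth test functions to all bounded continuous ones uniformly in $\varepsilon$ (equivalently, the $L^1(\mathbb R,dx)$-convergence of the density difference quotient $\varepsilon^{-H}(p_{\tilde X_{t,\varepsilon}}-p_{U_t})$), which forces one to combine the Malliavin-smoothness estimates with a careful treatment of the tails; the smooth case itself is essentially routine once one notices that $DU_t$ is non-random.
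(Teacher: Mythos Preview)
Your argument for $g\in C_b^\infty$ is correct and pleasantly direct: Taylor expansion plus Lemma~\ref{p9} gives $\mathbb E[g'(U_t)V_t]$, and since $DU_t$ is deterministic the integration by parts to reach $\|DU_t\|^{-2}_{L^2[0,T]}\mathbb E[g(U_t)\delta(V_tDU_t)]$ is clean. This part is more elementary than what the paper does.

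The paper, however, takes a genuinely different route that avoids your approximation step altogether. It never differentiates $g$: instead it invokes the Malliavin representation formula (3.2) of \cite{formula 3.2},
\[
\mathbb E[g(\tilde X_{t,\varepsilon})]-\mathbb E[g(U_t)]
=\mathbb E\!\left[\Big(\!\int_{U_t}^{\tilde X_{t,\varepsilon}}\! g(z)\,dz\Big)\,\delta\!\Big(\tfrac{DU_t}{\|DU_t\|^2}\Big)\right]
-\mathbb E\!\left[\frac{g(\tilde X_{t,\varepsilon})\,\langle D\tilde X_{t,\varepsilon}-DU_t,\,DU_t\rangle_{L^2[0,T]}}{\|DU_t\|_{L^2[0,T]}^2}\right],
\]
valid for every $g\in C_b$. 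Dividing by $\varepsilon^H$, subtracting $\|DU_t\|^{-2}\mathbb E[g(U_t)\delta(V_tDU_t)]$ (rewritten via \eqref{dual 1}), and regrouping produces three remainder terms $\Lambda_{1,\varepsilon},\Lambda_{2,\varepsilon},\Lambda_{3,\varepsilon}$, each of which is shown to vanish using only the $L^p$ convergences of Lemmas~\ref{p8} and~\ref{p9} and the boundedness of $g$. In particular Lemma~\ref{p8} (the $L^2$ closeness of $D\tilde X_{t,\varepsilon}$ and $DU_t$), which you do not need for smooth $g$, is exactly what handles the second term above; your proposal uses it only heuristically in the density-control step.

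What the paper's approach buys is that no smoothing of $g$ is ever required, so the ``main obstacle'' you identify simply does not arise. Your plan for that obstacle---uniform-in-$\varepsilon$ control of $\varepsilon^{-H}(p_{\tilde X_{t,\varepsilon}}-p_{U_t})$ in $L^1(\mathbb R)$---is in principle feasible via Malliavin density formulas, but it is essentially as hard as the lemma itself and is not made precise in your sketch; splitting $g-g_n$ into a compact part and a tail does not by itself absorb the factor $\varepsilon^{-H}$, since $\|(g-g_n)|_{B_R}\|_\infty\to 0$ gives no rate. So the smooth case is fine, but the passage to $C_b$ is a real gap in your write-up that the paper's method circumvents entirely.
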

\begin{proof}
	According to the proof of Proposition 4.8 in \cite{ref15}, we can see that
	this proof does not use the exact form of $U_t$ and $V_t$. Thus, by combining
	Lemmas \ref{p8}--\ref{p9}, then repeating the deduction of Proposition 4.8 in \cite{ref15},
	we can easily get this result. Here we give a sketch of this proof.
	
	By using the formula (3.2) in \cite{formula 3.2},
	\begin{align*}
\mathbb{E}\left[ g\left( \tilde{X}_{t,\varepsilon} \right) \right] -\mathbb{E}\left[ g\left( U_t \right) \right] &=\mathbb{E}\left[ \int_{U_t}^{\tilde{X}_{t,\varepsilon}}{g\left( z \right) dz}\delta \left( \frac{DU_t}{||DU_t||_{L^2\left[ 0,T \right]}^{2}} \right) \right] \\
&\ \ \ \ -\mathbb{E}\left[ \frac{g\left( \tilde{X}_{t,\varepsilon} \right) \left< D\tilde{X}_{t,\varepsilon}-DU_t,DU_t \right> _{L^2\left[ 0,T \right]}}{||DU_t||_{L^2\left[ 0,T \right]}^{2}} \right] \\
&=\frac{1}{||DU_t||_{L^2\left[ 0,T \right]}^{2}}\mathbb{E}\left[ U_t\int_{U_t}^{\tilde{X}_{t,\varepsilon}}{g\left( z \right) dz} \right] \\
&\ \ \ \ -\frac{1}{||DU_t||_{L^2\left[ 0,T \right]}^{2}}\mathbb{E}\left[ g\left( \tilde{X}_{t,\varepsilon} \right) \left< D\tilde{X}_{t,\varepsilon}-DU_t,DU_t \right> _{L^2\left[ 0,T \right]} \right] .
	\end{align*}
    Then, by \eqref{dual 1}
	\begin{equation*}
		\delta \left( V_tDU_t \right) =V_tU_t+\left< DV_t,DU_t \right> _{L^2\left[ 0,T \right]}
.
	\end{equation*}
So that
\begin{align*}
&\frac{\mathbb{E}\left[ g\left( \tilde{X}_{t,\varepsilon} \right) \right] -\mathbb{E}\left[ g\left( U_t \right) \right]}{\varepsilon ^H}-\frac{1}{||DU_t||_{L^2\left[ 0,T \right]}^{2}}\mathbb{E}\left[ g\left( U_t \right) \delta \left( V_tDU_t \right) \right] \\
&=\frac{1}{||DU_t||_{L^2\left[ 0,T \right]}^{2}}\mathbb{E}\left[ \left( \frac{1}{\varepsilon ^H}\int_{U_t}^{\tilde{X}_{t,\varepsilon}}{g\left( z \right) dz}-g\left( U_t \right) V_t \right) U_t \right] \\
&\ \ -\frac{1}{||DU_t||_{L^2\left[ 0,T \right]}^{2}}\mathbb{E}\left[ \left( g\left( \tilde{X}_{t,\varepsilon} \right) -g\left( U_t \right) \right) \left< \frac{D\tilde{X}_{t,\varepsilon}-DU_t}{\varepsilon ^H},DU_t \right> _{L^2\left[ 0,T \right]} \right] \\
&\ \ -\frac{1}{||DU_t||_{L^2\left[ 0,T \right]}^{2}}\mathbb{E}\left[ g\left( U_t \right) \left< \frac{D\tilde{X}_{t,\varepsilon}-DU_t}{\varepsilon ^H}-DV_t,DU_t \right> _{L^2\left[ 0,T \right]} \right] \\
&=:\varLambda _{1,\varepsilon}-\varLambda _{2,\varepsilon}-\varLambda _{3,\varepsilon}.
\end{align*}
Thus, the proof of \eqref{p10 eq1} can be transformed into the proof of convergence for $\varLambda _{1,\varepsilon}$, $\varLambda _{2,\varepsilon}$ and $\varLambda _{3,\varepsilon}$.

Hence, by repeating the deduction of Proposition 4.8 in \cite{ref15}, we can see $\lim_{\varepsilon\to}\varLambda _{i,\varepsilon}=0$,  $i=1,2,3$, under the results
in Lemmas \ref{p8}--\ref{p9}.
\end{proof}

\textbf{Proof of Theorem} \ref{Th2}: Note that, by Theorem 3.9 in \cite{Fan23} combining \eqref{p7 eq1}, for
each $t \in (0,T]$, $U_t$ can be regarded as $Z_t$.
So that, we have $I(\tilde{X}_{t,\varepsilon}||Z_t)=I(\tilde{X}_{t,\varepsilon}||U_t)$.
Hence, by \eqref{FI vs TV} and Lemma \ref{p10}, we can get
\begin{equation*}
	\lim_{\varepsilon \rightarrow 0}\frac{1}{\varepsilon^H}\sqrt{I( \tilde{X}_{t,\varepsilon}||Z_t )}
	\ge \frac{1}{2||DU_t||_{L^2[0,T]}^2}|\mathbb{E}[g(U_t)\delta(V_t DU_t)]|=\frac{1}{2||DU_t||_{L^2[0,T]}^2}|\mathbb{E}[g(U_t)\mathbb{E}[\delta(V_tDU_t)|U_t]]|.
\end{equation*}
for any continuous function $g$ bounded by 1. Then,
by the routine approximation argument,
we can choose $g(x)=\text{sgn}\Big(\mathbb{E}[\delta(V_tDU_t)|U_t=x]\Big) $.
Hence we obtain (\ref{Th2 eq1}).

\bigskip



\textbf{Declaration of interests} ~The authors declare that they have no known competing financial interests or personal relationships that
could have appeared to influence the work reported in this paper.

\bigskip

	$\begin{array}{cc}
		\begin{minipage}[t]{1\textwidth}
			{\bf Tongxuan Liu}\\
			School of Mathematics, Nanjing University of Aeronautics and Astronautics, Nanjing
			211106, China\\
			\texttt{18061750781@163.com}
		\end{minipage}
		\hfill
	\end{array}$
	
	\bigskip
	$\begin{array}{cc}
		\begin{minipage}[t]{1\textwidth}
			{\bf Qian Yu}\\
			School of Mathematics, Nanjing University of Aeronautics and Astronautics, Nanjing
			211106, China\\
			\texttt{qyumath@163.com}
		\end{minipage}
		\hfill
	\end{array}$


\begin{thebibliography}{99}
\bibitem{ref1}
 S. G. Bobkov, G. P. Chistyakov, F. G{\"o}tze, Berry-Esseen bounds in the entropic central limit theorem.
  \textit{Probab. Theory Related Fields}, \textbf{159},  435--478 (2014)


\bibitem{ref15}
N. T. Dung, N. T. Hang, Fisher information bounds and applications to SDEs with small noise. \textit{Stochastic Process. Appl.} \textbf{178}, 104468 (2024).
	
\bibitem{formula 3.2}
N. T. Dung, T. C. Son, Lipschitz continuity in the Hurst index of the solutions of fractional stochastic volterra integro-differential equations. \textit{Stoch. Anal. Appl.}, \textbf{41}, 693-712 (2023).

\bibitem{Fan22}
X. Fan, X. Huang, Y. Suo, C. Yuan, Distribution dependent SDEs driven by fractional Brownian motions. \textit{Stochastic Process. Appl.}, \textbf{151}, 23--67  (2022)


\bibitem{Fan23}
X. Fan, T. Yu, C. Yuan, Asymptotic behaviors for distribution dependent SDEs driven by fractional Brownian motions. \textit{Stochastic Process. Appl.}, \textbf{164}, 383--415  (2023)


\bibitem{ref2}
M. I. Freidlin, A. D. Wentzell, Random perturbations of dynamical systems. Translated from the 1979 Russian original by Joseph Sz{\"u}cs. Third edition. Grundlehren der math   ematischen Wissenschaften [Fundamental Principles of Mathematical Sciences], 260. Springer, Heidelberg, 2012.

\bibitem{GHM}
L. Galeati, F. A. Harang, A. Mayorcas, Distribution dependent SDEs driven by additive fractional Brownian motions. \textit{ Probab. Theory Related Fields}, \textbf{185}, 251--309 (2023)



\bibitem{ref21}Y. Hu, Analysis on Gaussian Spaces, World Scientific Publishing, 2017.

\bibitem{HW19}
X. Huang, F.-Y. Wang,  Distribution dependent SDEs with singular coefficients. \textit{Stochastic Process. Appl.}, \textbf{129}, 4747--4770 (2019)


\bibitem{ref3}
O. Johnson, Information theory and the central limit theorem. Imperial College Press, London, 2004.

\bibitem{ref5}
O. Johnson, Maximal correlation and the rate of Fisher information convergence in the central limit theorem. \textit{IEEE Trans. Inform. Theory}, \textbf{66}, 4992--5002 (2020)

  \bibitem{ref4}
  O. Johnson, A. R. Barron, Fisher information inequalities and the central limit theorem.
  \textit{Probab. Theory Related Fields}, \textbf{129}, 391--409 (2004)


  \bibitem{ref14}
  J. V. Linnik, An information-theoretic proof of the central limit theorem with Lindeberg
  conditions. (Russian) \textit{Theor. Probability Appl.}, \textbf{4}, 288--299 (1959)


\bibitem{Man68}
B. B. Mandelbrot, J. W. Van Ness, Fractional Brownian motions, fractional noises and applications. \textit{SIAM Rev.}, \textbf{10}, 422--437 (1968)


\bibitem{ref6}
I. Nourdin, D. Nualart, Fisher information and the fourth moment theorem.
\textit{Ann. Inst. Henri Poincar{\'e} Probab. Stat.}, \textbf{52}, 849--867 (2016).

\bibitem{ref19}
D. Nualart, The Malliavin calculus and related topics. Probability and its Applications.
  Springer-Verlag, Berlin, second edition, 2006.



\bibitem{ref22}
M. S. Pinsker, \textit{Information and information stability of random variables and processes.} Translated and edited by Amiel Feinstein Holden-Day, Inc., San Francisco, Calif.  London-Amsterdam 1964.



\bibitem{ref23} A. J. Stam, Some inequalities satisfied by the quantities of information of Fisher and
 Shannon. \textit{Information and Control}, \textbf{2}, 101--112 (1959).



\bibitem{Wang18}
F.-Y. Wang, Distribution dependent SDEs for Landau type equations. \textit{Stochastic Process. Appl.}, \textbf{128}, 595--621 (2018)

\end{thebibliography}
\end{document}